\def\ol{\overline}
\def\e{\epsilon}
\def\lf{\left}
\def\ri{\right}
\def\a{{\alpha}}
\def\wt{\widetilde}
\def\tn{{\wt\nabla}}
\def\p{\partial}
\newcommand\R{{\mathbb R}}
\newcommand\C{{\mathbb C}}
\def\ii{\sqrt{-1}}
\def\jbar{{\bar\jmath}}
\def\tth{\tilde{h}}
\def\K{K\"ahler }
\def\KR{K\"ahler-Ricci }
\def\KRF{K\"ahler-Ricci flow }
\def\A{Amp\`{e}re }
\def\H{H\"{o}lder }
\def\ddbar{\partial\bar\partial}
\def\be{\begin{equation}}
\def\ee{\end{equation}}
\def\ol{\overline}
\def\lf{\left}
\def\ri{\right}
\def\a{{\alpha}}
\def\e{\epsilon}
\def\ijb{{i\jbar}}
\def\Ric{\text{\rm Ric}}
\def\Rm{\text{\rm Rm}}
\def\wt{\widetilde}
\def\tn{{\wt\nabla}}
\def\p{\partial}
\def\C{\Bbb C}
\def\wt{\widetilde}
\def\tn{{\wt\nabla}}
\def\p{\partial}
\def\p{\partial}
\def\C{\Bbb C}
\def\ii{\sqrt{-1}}
\def\KRF{K\"ahler-Ricci flow }
\def\ttR{\wt  R}
\def\tn{\wt\nabla}
\def\tD{\wt\Delta}
\newtheorem{thm}{Theorem}[section]
\newtheorem{lem}{Lemma}[section]
\newtheorem{prop}{Proposition}[section]
\newtheorem{cor}{Corollary}[section]
\theoremstyle{definition}
\newtheorem{defn}{Definition}[section]
\theoremstyle{remark}
\newtheorem{rem}{Remark}
\numberwithin{equation}{section}
\begin{document}
 \title{On a modified parabolic complex Monge-Amp\`{e}re equation with applications}
\author{Albert Chau$^1$}

\address{Department of Mathematics,
The University of British Columbia, Room 121, 1984 Mathematics
Road, Vancouver, B.C., Canada V6T 1Z2} \email{chau@math.ubc.ca}

\author{Luen-Fai Tam$^2$}

\thanks{$^1$Research
partially supported by NSERC grant no. \#327637-06}
\thanks{$^2$Research partially supported by Hong Kong RGC General Research Fund
\#GRF 2160357}

\address{The Institute of Mathematical Sciences and Department of
 Mathematics, The Chinese University of Hong Kong,
Shatin, Hong Kong, China.} \email{lftam@math.cuhk.edu.hk}
\thanks{\begin{it}2000 Mathematics Subject Classification\end{it}.  Primary 53C55, 58J35.}
\thanks{\begin{it}Key words and phrases\end{it}.  Non-compact K\"ahler-Einstein metrics, K\"ahler-Ricci flow, parabolic  Monge-Amp\`{e}re  equation.}

\begin{abstract} We study a parabolic complex Monge-\A type equation of the form \eqref{MA} on a complete non-compact \K manifold. We prove a short time existence result  and obtain basic estimates. Applying these results, we prove that under certain assumptions on a given real and closed (1,1) form $\Omega$ and initial \K metric $g_0$ on $M$, the modified \KR flow $g'=-\Ric+\Omega$ has a long time solution converging to a complete \K metric such that $\Ric=\Omega$, which extends the result in \cite{Ca} to non-compact manifolds. We will also obtain a long time existence result for the \KR flow which generalizes a result \cite{ChauTamYu08}.
\end{abstract}

 \maketitle\markboth{Albert Chau and Luen-Fai Tam} {On a modified parabolic complex Monge-Amp\`{e}re equation with applications}

\section{introduction}

 Let $(M, g_0)$ be a smooth complete non-compact \K manifold.  In this article we will study parabolic complex-Monge \A equations of the following type on $M$:

  \begin{equation}\label{MA}
\left\{%
\begin{array}{ll}
    \dfrac{\p v}{\p t}= \log \dfrac{(\sigma(t)+ \ii\partial\bar{\partial}v)^n}{(\omega_0)^n}- f \ \ \text{in $M\times[0,T)$}
   \\
   v(x,0)=u. \\
\end{array}%
\right.
\end{equation}
where $f$ and $\sigma(t)$ are given smooth families of functions and real and closed $(1, 1)$ forms on $M$ for $t\in [0, T)$ respectively, $u$ is a smooth function on $M$ and $\omega_0$ is the \K form of $g_0$.  There is a close connection between \eqref{MA} and the \KR flow equation on $M$

  \begin{equation}\label{KRF}
\left\{%
\begin{array}{ll}
    \dfrac{\p \omega }{\p t}=-\Ric(\omega)      \\
   \omega(0)=\omega_0. \\
\end{array}%
\right.
\end{equation}
where $\omega$ is a \K form with corresponding Ricci form $\Ric(\omega)$.  Let $v$ be a solution to \eqref{MA} such that   the corresponding family $\omega(t) := \sigma(t) +\ii \partial\bar{\partial}v$ are \K forms.  Then if $\Ric_0=\sqrt{-1}\partial\bar{\p}f$ and $\sigma(t)=\omega_0$, then $\omega(t)$ evolves under \eqref{KRF} and remains in the \K class $[\omega_0]$ (see \cite{Ca}, \cite{ChauTam09}).  On the other hand, if $f=0$ and $\sigma_t=-t\Ric(\omega_0)+\omega_0$, then $\omega(t)$ evolves under \eqref{KRF} but does not remain in the same \K class in general (see \cite{TZ}, \cite{Ts} on compact manifolds and \cite{LZ} on non-compact manifolds).  Conversely, it can also be shown in the above cases that given a solution $\omega(t)$ to \eqref{KRF}, there exists a corresponding solution $v$ to \eqref{MA} such that $\omega(t) := \sigma(t) +\ii \partial\bar{\partial}v$ (see references above).

  One of the goals in this article is to generalize our previous results in \cite{ChauTam09} where the authors proved: when $\Ric_0=\sqrt{-1}\partial\bar{\p}f$ and $\sigma(t)=\omega_0$, then $g_0$ converges to a \K  Ricci flat metric $g$ under \eqref{KRF} under certain additional assumptions on $f$ and $g_0$.  We will extend this result to the case when $\Ric_0-\Omega=\sqrt{-1}\partial\bar{\p}f$ where $\Omega$ is given but  not necessarily   zero.   We prove that when $(M^n,g_0)$ is complete, non-compact with bounded curvature, with volume growth $V_{x_0}(r)\le Cr^{2n}$ for some $x_0$  and $C$ for all $r$,
   and satisfies a certain Sobolev inequality, then:

 \begin{it} Under the above conditions, the \KRF \eqref{KRF} has a long
 time solution $g(t)$ converging smoothly on $M$ provided
 $|f|(x)\le \frac{C}{1+\rho_0^{2+\e}(x)}$ for
 some $C, \e>0$ and all $x$ such that the Ricci form of the limit metric is $\Omega$.\end{it}

\noindent See Theorem \ref{mainthm} for details.  This will correspond to \eqref{MA} in the case $\omega(t)=\omega_0$ and $\Ric_0-\Omega=\sqrt{-1}\partial\bar{\p}f$ in which case the equation for $\frac{\p}{\p t}\omega(t)$ will change from \eqref{KRF} only by the addition of  $\Omega=\sqrt{-1}\partial\bar{\p}f$ to the RHS of \eqref{KRF}.  The proof combines the a priori estimates developed here together with estimates from \cite{ChauTam09}, in particular the $C^0$ estimate.  The main difference here is that the corresponding metrics $g(t)$ are not evolving under the standard \KR flow \eqref{KRF}, and general \KR theory cannot be directly  applied as in  \cite{ChauTam09}.  Our results are motivated by the results in  \cite{TY2, TY3} which extend the famous results in \cite{Y} to the complete non-compact setting under additional assumptions.  By studying the elliptic Monge-\A equation Yau \cite{Y} proved that if $(M, g_0)$ is a compact \K manifold and $\Omega\in c_1(M)$, then there exists a \K metric $g$ in the same class as $g_0$ with $\Omega$ as its Ricci tensor. This result was later re-established in \cite{Ca} by considering the corresponding parabolic complex Monge-\A on compact manifolds.

Our second goal will be to establish a longtime existence result for \eqref{KRF}.  We prove that when $(M^n,g_0)$ is smooth, complete and non-compact with injectivity radius bounded below and curvature approaching  zero pointwise at infinity then

 \begin{it} Under the above conditions, the \KRF \eqref{KRF} has a smooth longtime solution provided there exists a strictly plurisubharmonic function on $M$.\end{it}

 \noindent See Corollary \ref{longtimeexistence} for details. Since a simply connected complete non-compact \K manifold with nonnegative holomorphic bisectional curvature is a product of a compact \K manifold with nonnegative holomorphic bisectional curvature and a complete non-compact \K manifold with nonnegative holomorphic bisectional curvature supporting a strictly pluri-subharmonic function \cite{NiTam2003-2}, the theorem generalizes the longtime existence result in \cite{ChauTamYu08}.  In particular, the result applies when $M=\C^n$ or more generally a Stein manifold.  We establish this by showing \eqref{MA} has a longtime solution for appropriate choices of $f$ and $\sigma(t)$.  The proof combines ideas from \cite{LZ},  the a priori estimates developed here and the results in  \cite{ChauTamYu08}. In fact, this is a corollary of a more general result Theorem \ref{singularities}.

  The paper is organized roughly as follows.  In \S 2 we prove a general shorttime existence result for \eqref{MA} where we do not assume $f$ or $v(x, 0)$ are bounded.  In \S 3 we prove a priori estimates for \eqref{MA} where we assume $f$ and $v(x, 0)=0$ are bounded.  In \S 4 we prove the main results Theorem \ref{longtimeexistence} and Theorem \ref{mainthm}.

 \section{Short time existence }\vskip .2cm

Consider parabolic complex Monge-Amp\`ere equation \eqref{MA} on a complete non-compact \K manifold  $(M, g_0)$ where $\sigma=\sigma(t)$ is a given smooth real and closed $(1, 1)$ form on $M$ for $t\in [0, T)$, $\omega_0$ is the \K form of $g_0$, and $f$ and $u$ are smooth possibly unbounded  functions on $M\times[0, T)$ and $M$ respectively.

We will do our analysis of \eqref{MA} in appropriate H\"{o}lder spaces on $M$ which in turn will involve the notions of bounded geometry of various orders with respect to $g_0$.  We will now make the appropriate definitions for these.  We begin by recalling the definition for  a complete K\"ahler manifold $(M^n, g)$ ($n$ is the complex dimension) to have bounded geometry of a certain order, and we also recall the corresponding parabolic and elliptic H\"older spaces on $M$ relative to $g_0$ (see also \cite{Chau04, TY2, TY3}).

\begin{defn}\label{boundedgeom} Let $(M^n, g)$ complete K\"ahler manifold. Let  $k\ge 1$ be an integer and $0<\alpha<1$. $g$ is said to have  bounded geometry of   order $k+\alpha$ if there are positive numbers $r, \kappa_1, \kappa_2$  such that at every $p\in M$ there is a neighborhood $U_p$ of $p$, and local biholomorphism $\xi_{p}$ from $D(r)$ onto $U_p$ with $\xi_p(0)=p$ satisfying  the following properties:
  \begin{itemize}
    \item [(i)] the pull back metric $\xi_p^*(g)$   satisfies:
    $$
    \kappa_1 g_e\le \xi_p^*(g)\le \kappa_2 g_e$$ where $g_e$ is the standard metric on $\C^n$;
    \item [(ii)] the components $g_{i\jbar}$ of $\xi_p^*(g)$ in the natural coordinate of $D(r)\subset \C^n$  are uniformly bounded in the standard $C^{k+\alpha}$ norm in $D(r)$ independent of $p$.
  \end{itemize}

 \end{defn}
 It is obvious that if $g_0$ is of bounded geometry of order $k+\alpha$, then it is of bounded geometry of order $k+\alpha'$ for all $\alpha'<\alpha$.  The  family $$\mathcal{F}=\{(\xi_p,U_p), p\in M\}$$ is called  a family of {\it quasi-coordinate neighborhoods}.

   In the following we will define a norm on $m$ forms on $M$ for any $m$ and we will define corresponding Banach spaces.  We will make these definitions relative to fixed  quasi-coordinates $\mathcal{F}$.  For two different quasi-coordinates, the corresponding norms defined will be equivalent f, and the corresponding Banach spaces will be the same. See the appendix for details.

  For any domain $\Omega$ in $\C^n$ and integer $k\ge 0$ and $0<\alpha <1$, let $||\cdot||_{\Omega,k+\alpha}$ be the standard $C^{k+\alpha}$ norm for functions on $\Omega$.  If $T>0$ and $k$ is even let $||\cdot||_{\Omega\times[0,T);k+\alpha,k/2+\alpha/2 }$ be the standard parabolic $C^{k+\alpha, k/2+\alpha/2}$ norm for functions on $\Omega\times[0,T)$ (see appendix see details).

  Define the $C^{k+\alpha}(M)$ norm for a smooth $m$-form $f$ on $M$ by
\begin{equation}\label{Cb-norm-1}
     ||f||_{m,k,\alpha}=\sup_{p\in M}\max_I||(\xi_p^*f)_{I}||_{D(r),k+\alpha}.
\end{equation}
where $I$ represents a multi-index and the $(\xi_p^*f)_{I}$'s are the local components of the form $\xi_p^*f$.  Likewise, if $T>0$ and $k$ is even define the $C^{k+\alpha,k/2+\alpha/2}(M\times[0,T))$ norm for smooth time dependent $m$-form $f$ on $M\times[0,T)$ by
\begin{equation}\label{Cb-norm-2}
     ||f||_{m,k,\alpha}=\sup_{p\in M}\max_I||(\xi_p^*f)_I||_{_{D(r)\times[0,T);k+\alpha,k/2+\alpha/2}}.
\end{equation}

\begin{defn}\label{Holderspace}
For any $m$ and $0\le k\le 2$ we define $C_{m}^{k+\alpha}(M)$ to be the  norm completion of space of smooth $m$-forms with norm $||\cdot||_{m,k,\alpha}$.  Likewise, for $k=0$ or $2$, let $C_{m}^{k+\alpha,k/2+\alpha/2}(M\times[0,T))$ be the  norm completion of space of smooth time dependent  $m$-forms with norm $||\cdot||_{m,k,\alpha}$.   Both $C_{m}^{k+\alpha}(M)$ and $C_{m}^{k+\alpha,k/2+\alpha/2}(M\times[0,T))$ are Banach spaces.   We will omit the subscript $m$ in the notation when there will be no confusion.
\end{defn}

We have the following lemma: \ref{boundedgeom}.

\begin{lem}\label{2+a-l1} Let $(M^n,g)$ be a complete non-compact
K\"ahler manifold of bounded geometry of order $2+\alpha$.  Then
\begin{enumerate}
\item [(i)] $M$ has bounded curvature.
\item [(ii)] 
  There is a smooth function $\rho\ge 1$ such that near infinity it is equivalent to the distance function   from a fixed point and has bounded gradient and Hessian.
\end{enumerate}
\end{lem}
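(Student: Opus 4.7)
My plan is to prove (i) by a direct computation of the curvature in the quasi-coordinate charts, and then use (i) together with a smoothing construction in those same charts to obtain (ii).

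For (i), I would write out the K\"ahler curvature formula in a chart $\xi_p:D(r)\to U_p$,
$$R_{i\bar j k\bar l} = -\partial_k\partial_{\bar l}g_{i\bar j} + g^{p\bar q}(\partial_k g_{i\bar q})(\partial_{\bar l}g_{p\bar j}),$$
and observe that every term on the right is a polynomial in the entries of $\xi_p^*g$, its first and second derivatives, and the entries of the inverse matrix. Hypothesis (ii) of Definition \ref{boundedgeom} gives a uniform $C^{2+\alpha}$ bound on $(g_{i\bar j})$, while hypothesis (i) forces $(g^{i\bar j})$ to be uniformly bounded. Hence each component of $R_{i\bar j k\bar l}$ is bounded on $D(r)$ independently of $p$, and the two-sided comparability $\kappa_1 g_e \le \xi_p^*g\le \kappa_2 g_e$ converts this into a bound on $|\Rm|_g$ that is uniform across $M$.

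For (ii), fix a basepoint $x_0$, let $d_0(x)=d_g(x,x_0)$, and set $\tilde\rho=\sqrt{1+d_0^2}$. Then $\tilde\rho\ge 1$, $\tilde\rho$ is $1$-Lipschitz with respect to $g$, and $\tilde\rho$ is comparable to $d_0$ outside a compact set. Because each $\xi_p$ is $\sqrt{\kappa_2}$-Lipschitz from $(D(r),g_e)$ to $(U_p,g)$, each pullback $\tilde\rho\circ\xi_p$ is uniformly Lipschitz on $D(r)$ with constant depending only on $\kappa_2$. I would smooth $\tilde\rho$ by convolving these pullbacks, in Euclidean coordinates, with a fixed mollifier $\phi_\epsilon$ on $\C^n$ of small radius $\epsilon$, producing smooth local functions $\rho_p$ on a slightly shrunken disk whose Euclidean derivatives of all orders are uniformly bounded and which satisfy $|\rho_p-\tilde\rho\circ\xi_p|\le C\epsilon$. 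These are then patched into a global $\rho$ via a locally finite partition of unity $\{\chi_p\}$ subordinate to the corresponding subcover of $M$.

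The main technical obstacle is to ensure that the patched $\rho=\sum_p\chi_p\,(\rho_p\circ\xi_p^{-1})$ has gradient and Hessian bounded uniformly on $M$, not merely on each chart. The key trick is to use $\sum_p\nabla\chi_p=0$ and $\sum_p\nabla^2\chi_p=0$ to rewrite
$$\nabla\rho=\sum_p\nabla\chi_p\,(\rho_p-\rho_{p_0})+\sum_p\chi_p\nabla\rho_p$$
(and analogously for $\nabla^2\rho$) for a reference index $p_0$ at the given point; the differences $\rho_p-\rho_{p_0}$ are $O(\epsilon)$ on overlaps since both approximate $\tilde\rho$. Combined with (a) a uniform bound on the number of charts meeting any point, which follows from the uniform radius $r$ and the comparability constants; (b) uniform bounds on the Christoffel symbols of $g$ in quasi-coordinates, which follow from the $C^{1+\alpha}$ bound on $g_{i\bar j}$ from part (i); and (c) a mollifier scale $\epsilon$ chosen independently of $p$, this yields $|\nabla\rho|_g+|\nabla^2\rho|_g\le C$ on $M$, while $|\rho-\tilde\rho|\le C\epsilon$ preserves the equivalence of $\rho$ with $d_0$ at infinity.
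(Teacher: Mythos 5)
Your argument for (i) is fine and is exactly what the paper means by ``obvious from the definition'': the local curvature formula involves only $g_{i\bar j}$, its first two derivatives and the inverse matrix $g^{i\bar j}$, all of which are uniformly controlled in the quasi-coordinate charts, and the two-sided bound $\kappa_1 g_e\le \xi_p^*g\le\kappa_2 g_e$ converts componentwise bounds into a bound on $|\Rm|_g$.

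For (ii) the paper does not construct $\rho$ at all; it invokes \cite{Sh2} (see also \cite{Tam07}), where such an exhaustion function is produced on any complete manifold with bounded curvature, so that (ii) follows from (i). Your self-contained mollification argument is therefore a genuinely different route, but as written it has a gap: in Definition \ref{boundedgeom} the maps $\xi_p:D(r)\to U_p$ are only \emph{local} biholomorphisms onto $U_p$, not embeddings. Hence $\xi_p^{-1}$ is in general multivalued, and the pushforward $\rho_p\circ\xi_p^{-1}$ that you feed into the partition of unity is not a well-defined function on $U_p$. You cannot repair this by shrinking to a uniform sub-disk on which $\xi_p$ is injective: a local biholomorphism with two-sided metric bounds need not be injective on any ball of uniform radius (think of exponential charts on a manifold with collapsing injectivity radius --- precisely the situation quasi-coordinates are designed for, and the reason the paper's Appendix works with small sub-disks and branch maps $\phi$ satisfying $\xi_q\circ\phi=\theta_p$). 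If $\xi_p(z_1)=\xi_p(z_2)$, the Euclidean mollifications of $\tilde\rho\circ\xi_p$ at $z_1$ and $z_2$ average $\tilde\rho$ over different branches and agree only up to $O(\epsilon)$, which is not enough to descend to a single smooth function. The construction can be repaired by smoothing intrinsically --- averaging $\tilde\rho$ against a Riemannian mollifier over geodesic balls, or running the heat semigroup for a fixed short time, which is essentially what \cite{Sh2} and \cite{Tam07} do --- or it goes through as written under the extra hypothesis of a uniform lower bound on the injectivity radius. The remaining ingredients you list (bounded overlap of the cover, cutoffs with uniformly bounded derivatives, the $\sum_p\nabla\chi_p=0$ trick, and the harmless $C/\epsilon$ growth of the mollified second derivatives for fixed $\epsilon$) are standard and correct, though the bounded multiplicity of the cover is most cleanly obtained from Bishop--Gromov volume comparison using the curvature bound from (i) rather than from the comparability constants alone.
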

\begin{proof} (i) is obvious from the definition of bounded geometry of order $2+\alpha$. (ii) is a result in \cite{Sh2}, see also \cite{Tam07}.
\end{proof}

In order to state the main result of  short time existence of \eqref{MA}, we  first discuss the following special case:
\begin{equation}\label{MA-special}
\left\{%
\begin{array}{ll}
    \dfrac{\p v}{\p t}= \log \dfrac{(\sigma(t)+ \ii\partial\bar{\partial}v)^n}{(\omega_0)^n}\ \       \\
   v(x,0)=0. \\
\end{array}%
\right.
\end{equation}

\begin{lem}\label{shorttime-l1} Let $(M^n,g_0)$ be a smooth complete non-compact
K\"ahler manifold of bounded geometry of order $2+\alpha$  and let $\sigma=\sigma(t)$ be a smooth family of closed and real (1,1) forms on $M\times[0,T)$ such that
 \begin{itemize}
   \item [(i)] $\sigma\in C^{\alpha,\frac\alpha2}(M\times[0,T))$;
   \item [(ii)] $c^{-1}\omega_0\le\sigma\le c\omega_0$ for some $c>0$ on $M\times[0,T)$;
   \item [(iii)] there exists $v_0\in C^{2+\alpha,1+\frac\alpha2}(M\times[0, T))$ such that $$w_0:=\frac{\p v_0}{\p t}-\log\lf(\dfrac{(\sigma+\ii\ddbar v_0)^n}{\omega_0^n}\ri)$$ satisfies $w_0(x,0)=0$.
 \end{itemize}
Then there exists $0<T'\le T$ such that \eqref{MA-special} has a smooth solution $v\in C^{2+\frac\alpha2,1+\frac\alpha4}(M\times[0,T'])$ so that  $\sigma+\ii \ddbar v$ is uniformly equivalent to $\omega_0$ in $M\times[0,T']$.
\end{lem}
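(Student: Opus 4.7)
The plan is to reduce \eqref{MA-special} to a perturbation problem around the approximate solution $v_0$ from hypothesis (iii), and then to invoke an implicit function theorem in parabolic H\"older spaces adapted to the quasi-coordinate structure of $(M,g_0)$ from Definition \ref{boundedgeom}.

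First I would substitute $v=v_0+u$ with $u(\cdot,0)=0$ and set $\tilde\omega:=\sigma+\ii\ddbar v_0$. Since $w_0$ is a smooth function on $M\times[0,T)$ by (iii), $\tilde\omega$ is automatically a smooth positive $(1,1)$ form there; combined with (ii) and the $C^{2+\a,1+\a/2}$-regularity of $v_0$, $\tilde\omega$ is uniformly equivalent to $\omega_0$ at $t=0$, and after shrinking $T$ if necessary this remains true on $M\times[0,T]$ with components in $C^{\a,\a/2}$. In the new unknown, the equation reads
\begin{equation*}
\frac{\p u}{\p t}=\log\frac{(\tilde\omega+\ii\ddbar u)^n}{\tilde\omega^n}-w_0,\qquad u(\cdot,0)=0,
\end{equation*}
where $w_0\in C^{\a,\a/2}(M\times[0,T])$ satisfies $w_0(\cdot,0)=0$; the vanishing of $w_0$ at $t=0$ is precisely the compatibility condition matching the initial data $u(\cdot,0)=0$ with $\p_t u(\cdot,0)=0$.

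Fix an auxiliary exponent $\a'\in(0,\a)$ (ultimately $\a'=\a/2$) and for $T'\in(0,T]$ introduce the Banach spaces
\begin{align*}
X_{T'}&=\{u\in C^{2+\a',1+\a'/2}(M\times[0,T']):u(\cdot,0)=0\},\\
Y_{T'}&=\{\psi\in C^{\a',\a'/2}(M\times[0,T']):\psi(\cdot,0)=0\},
\end{align*}
together with the nonlinear map
\begin{equation*}
\Phi(u)=\frac{\p u}{\p t}-\log\frac{(\tilde\omega+\ii\ddbar u)^n}{\tilde\omega^n}
\end{equation*}
defined on the open neighborhood of $0\in X_{T'}$ where $\tilde\omega+\ii\ddbar u$ stays uniformly positive. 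Working in the quasi-coordinate charts, a direct computation shows that $\Phi:X_{T'}\to Y_{T'}$ is a $C^1$ map---the strict inequality $\a'<\a$ is precisely what makes the nonlinear $\log\det$ term Fr\'echet differentiable with a continuous derivative---and that its Fr\'echet derivative at $0$ equals
\begin{equation*}
D\Phi(0)[h]=\frac{\p h}{\p t}-\tilde\Delta h,
\end{equation*}
where $\tilde\Delta$ denotes the (uniformly elliptic) Laplacian of $\tilde\omega$.

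The main obstacle is the linear parabolic theory on the non-compact manifold: for every $\psi\in Y_{T'}$ one must produce a unique $h\in X_{T'}$ solving $(\p_t-\tilde\Delta)h=\psi$ with the uniform Schauder-type estimate
\begin{equation*}
\|h\|_{C^{2+\a',1+\a'/2}(M\times[0,T'])}\le C\,\|\psi\|_{C^{\a',\a'/2}(M\times[0,T'])},
\end{equation*}
with $C$ independent of $T'\le 1$. I would build this by combining local interior parabolic Schauder estimates on each quasi-coordinate chart $(\x_p,U_p)$ (legitimate because $g_0$ has bounded geometry of order $2+\a$, hence bounded curvature by Lemma \ref{2+a-l1}(i)) with a global $C^0$ bound from the maximum principle and a standard exhaustion argument on relatively compact sub-domains to pass from the local estimates to a global solution. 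Once this linear isomorphism is in hand, the implicit function theorem supplies a unique $u\in X_{T'}$ with $\Phi(u)=-w_0$ as soon as $\|w_0\|_{Y_{T'}}$ is sufficiently small; the pointwise bound $|w_0(x,t)|\le C(T')^{\a/2}$ (from $w_0(\cdot,0)=0$ and $w_0\in C^{\a,\a/2}$) combined with the interpolation $\|\cdot\|_{C^{\a',\a'/2}}\le C\|\cdot\|_{C^0}^{1-\a'/\a}\|\cdot\|_{C^{\a,\a/2}}^{\a'/\a}$ confirms this smallness for $T'$ small enough. Setting $v=v_0+u$ and taking $\a'=\a/2$ then produces the desired solution of \eqref{MA-special} in $C^{2+\a/2,1+\a/4}(M\times[0,T'])$ with $\sigma+\ii\ddbar v$ uniformly equivalent to $\omega_0$; interior smoothness of $v$ follows by standard parabolic bootstrapping on the quasi-coordinate charts using smoothness of $\sigma$ and $v_0$.
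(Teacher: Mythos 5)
Your proposal is correct and shares its core with the paper's proof: both run a Hamilton-style inverse/implicit function theorem in parabolic H\"older spaces built on the quasi-coordinates, both invert the linearization $\p_t-\tilde\Delta$ by combining the maximum principle (with the exhaustion function of Lemma \ref{2+a-l1}(ii)) with interior Schauder estimates on the charts and an exhaustion by bounded domains, and both exploit $w_0(\cdot,0)=0$ together with the drop from exponent $\a$ to $\a'<\a$ to make the perturbation small. The one genuine divergence is the final smallness mechanism: you recenter at $v_0$ and shrink the time interval, estimating $\|w_0\|_{C^{\a',\a'/2}(M\times[0,T'])}\le C(T')^{\frac{\a}{2}(1-\a'/\a)}$ by interpolation, whereas the paper stays on a fixed interval and replaces $w_0$ by its time-translate $w_\tau$ (equal to $0$ for $t\le\tau$), showing $\|w_\tau-w_0\|_{C^{\a/2,\a/4}}=O(\tau^{\a/4})$ and then reading off a solution of \eqref{MA-special} on $[0,\tau]$. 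Your variant is cleaner conceptually but purchases this at the cost of needing the norm of $D\Phi(0)^{-1}$ and the modulus of continuity of $D\Phi$ to be uniform in $T'\le 1$ (so that the IFT neighborhood does not shrink with $T'$); you flag this, and it is standard for zero initial data, but the paper's translation trick sidesteps the issue entirely. Two minor points: (a) your codomain should be the full space $C^{\a',\a'/2}$ rather than $Y_{T'}$, since $\Phi(u)(\cdot,0)=\p_t u(\cdot,0)$ need not vanish for general $u\in X_{T'}$ --- harmless, because $D\Phi(0)$ is onto the full space and $-w_0$ lies in it; (b) the exponent drop is not what makes $\Phi$ Fr\'echet-$C^1$ (the $\log\det$ map is smooth from $C^{2+\b}$ to $C^{\b}$ on the positivity region for any $\b\le\a$); it is needed only for the smallness estimate, which is where you in fact use it.
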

\begin{proof} The idea is based on a general implicit function Theorem argument outlined in \cite{H1} \footnote{Also see Proposition 5.1 in \cite{CCH} for an application of this argument to the entire graphical Mean Curvature flow}.  By choosing a smaller $T$ if necessary, we may assume that $\sigma$ is uniformly equivalent to $\omega_0$ in $M\times[0,T]$.  In particular, there exists $\delta>0$ and $C_1>0$ such that if $\||v||_{2+\frac\alpha2,1+\frac\a4}<\delta$ then $C_1\omega_0\ge\sigma+\ii\ddbar v\ge C_1^{-1}\omega_0$ in $M\times[0,T]$.  For the remainder of the proof for any $k$ and $\beta$ we will denote the spaces  $C^{k+\beta,k/2+\beta/2}(M\times[0,T])$ and $C^{k+\beta}(M)$ simply by  $C^{k+\beta,k/2+\beta/2}$ and $C^{k+\beta}$.

We define

  $$ \mathcal{B}=\{v\in C^{2+\frac\alpha2,1+\frac\a4}| \ ||v||_{2+\frac\alpha2,1+\frac\a4}<\delta, v(x,0)=0\}.
  $$
Then $\mathcal{B}$ is an open ball in a closed subspace of $C^{2+\frac\alpha2,1+\frac\a4}$.
 Now define the map
$$
F: \mathcal{B}\to C^{\frac\alpha2,\frac\a4}
$$
by
$$
F(v)=\dfrac{\p v}{\p t}- \log \frac{(\sigma+\ii\ddbar v)^n}{\omega^n}.
$$
Then the map $F$ is well defined and $C^1$ such that the differential $DF_{v}$ at any $v\in \mathcal{B}$ is given by
$$
DF_{v}(\phi)=\dfrac{\p \phi}{\p t}- (^v\sigma)^\ijb\phi_\ijb
$$
where $(^v\sigma)^\ijb$ is the inverse of $(^v\sigma)_\ijb:=\sigma_\ijb+v_\ijb$.

\begin{bf}Claim 1:\end{bf} $DF_{v}$ is a bijection from the Banach space
$$
  \mathcal{B}_1=\{\phi\in C^{2+\frac\alpha2,1+\frac\a4}|\ \phi(x,0)=0\}
  $$
  onto $C^{ \frac\alpha2,\frac\a4}$.

 Note that the claim is straight forward in the case that $^v\sigma$ has bounded curvature on $M\times[0, T]$.  As we cannot assume this however, we must proceed more carefully.  Let $\rho$ be a smooth function on $M$ equivalent to the distance function with respect to $g_0$ from some point $p$ as Lemma \ref{2+a-l1}. Since the metrics $^v\sigma$ are uniformly equivalent to $g_0$ and $\rho\ge1$, there is a constant $C_2$ such that
  $$
  |^v\sigma_\ijb \rho_\ijb|< C_2\rho
  $$
  in $M\times[0,T]$. Hence if $\phi\in \mathcal{B}_1$, then for any $\e>0$,
  $$
  \frac{\p }{\p t}(\phi+\e e^{C_2t}\rho)-^v\sigma_\ijb(\phi_\ijb+\e e^{C_2t}\rho_\ijb)>0.
  $$
 On the other hand, the minimum of  $\phi+\e e^{C_2t}\rho$ is attained in a compact set of $M\times[0,T]$, and thus by the maximum principle, we conclude that $\phi+\e e^{C_2t}\rho\ge0$ because $\phi(x,0)=0$. Letting $\e\to0$, we conclude $\phi\ge0$ on $M\times[0, T]$. Similarly, one can prove that $\phi\le0$. Hence $\phi=0$ and $DF_v$ is injective.

 Now, let $\Omega_l$ be bounded domains with smooth boundary which exhaust $M$ and $^l\phi$ be the solution of
 $DF_{v}(^l\phi )=w$ in $\Omega_l\times[0,T ]$ where $\phi^l=0$ for $t=0$ and on $\p\Omega_l\times[0,T ]$ and $w\in C^{\frac\alpha2,\frac\a4}(M\times[0,T]$. Let $C_3>\sup_{M\times[0,T]}|w|$. Then
 $DF_{v}(^l\phi+C_3t)>0$. By the maximum principle, we conclude that $^l\phi\ge -C_3$. Similarly, we have $^l\phi\le C_3$. Hence the sequence $|^l\phi|$ is uniformly bounded by $C_3$.

 Now for any $p\in M$, let $(\xi_p,U_p)$, and $\xi_p: D(r)\to U_p$ be as in Definition \ref{boundedgeom}. The pull back of $^l\phi$ satisfies:
 $$
 \dfrac{\p \,^l\phi}{\p t}-(^v\sigma )^{\ijb} (^l\phi)_\ijb =w
$$
in $D(r)$. For simplicity, we use  $^l\phi$ to denote the pull back of $^l\phi$, etc.  By our hypothesis, the components $(^v\sigma)^{\ijb}$ above are uniformly equivalent to the standard Euclidean metric and are uniformly bounded in the standard $C^{\alpha, \frac\alpha2}$ norm on $D(r)\times[0, T]$.  Then by standard Schauder estimates we have
$$
||^l\phi||_{_{D(\frac r2)\times[0,T],2+\frac\alpha2, 1+\frac\alpha4}}\le C_4
$$
for some $C_4$ independent of $p$ and sufficiently large $l$.  Now a standard diagonalizing subsequence argument produces a $\phi \in C^{2+\frac\alpha2,1+\frac\a4}$ such that $DF_v (\phi)=w$. So $DF_v$ is surjective and the claim is established.

 Now let $v_0$ be the function in (iii) which is in $C^{2+\alpha,1+\frac\a2}(M\times[0,T))$, then $w_0=F(v_0)$ and $w_0(x,0)=0$. By the inverse function theorem, there exists $\e>0$ such that if $||w-w_0||_{\frac\alpha2, \frac\alpha4}<\e$, then there is $v\in C^{2+\frac\a2 ,1+\frac\a4}$ such that $F(v)=w$.

For any $0<\tau<1$, let $w_\tau$ be such that
\begin{equation}
w_\tau(x, t)=\left\{%
\begin{array}{ll}
 &   0, \hspace{2.1cm} t\leq \tau\\
 & w_0(x, t-\tau),\hspace{0.25cm} \tau<t<1
\end{array}%
\right.
\end{equation}
\begin{bf}Claim 2:\end{bf} $||w_\tau-w_0||_{\frac\alpha2, \frac\alpha4}<\e$ for sufficiently small $\tau>0$.

We will still use $w_\tau$ and $w_0$ to denote the respective pull backs under $\xi_p$.  Let $x, x'\in D(r)$, $t, t'$. Let $A$ be the $2+\alpha$ norm of $w_0$ and let $ \eta=w_\tau-w_0$.

Case 1: $t, t'\le \tau$. Then \begin{equation}\nonumber
\begin{split}|\eta(x,t)-\eta(x',t')|&=|w_0(x,t)-w_0(x',t')|\\
&\le A\min\{\lf(|x-x'|^\alpha +|t-t'|^\frac\alpha2\ri), t^\frac\alpha2+(t')^\frac\alpha2\}.
\end{split}
\end{equation}
where we have used the fact that $w_0|_{t=0}=0$. Thus if  $|x-x'|\ge \tau^\frac12$ then $$|\eta(x,t)-\eta(x',t')|\le A  (t^\frac\alpha2+(t')^\frac\alpha2)\le2A\tau^\frac\alpha2\le 2A\tau^\frac\alpha4\lf(|x-x'|^\frac\alpha2+|t-t'|^\frac\alpha4\ri),$$
and if $|x-x'|\le \tau^\frac12$ then
$$|\eta(x,t)-\eta(x',t')|\le A \lf(|x-x'|^\alpha +|t-t'|^\frac\alpha2\ri)\le A\tau^\frac\alpha4\lf(|x-x'|^\frac\alpha2+|t-t'|^\frac\alpha4\ri)
$$
because $|t-t'|\le \tau$.  In either case above we see that the claim is true.

Case 2: $t, t'\ge \tau$. Then

\begin{equation}
  \begin{split}
    |\eta(x,t)-\eta(x',t')| &= |w_0(x,t)-w_0(x',t')-w_0(x,t-\tau)+w_0(x',t'-\tau)| \\
      & \le 2A\min\{\tau^\frac\alpha2, |x-x'|^\alpha+|t-t'|^\frac\alpha2\}
  \end{split}
\end{equation}
Thus if $|t-t'|\ge \tau$, then
$$
|\eta(x,t)-\eta(x',t')| \le 2A\tau^\frac\alpha2\le 2A\tau^\frac\alpha4\lf(|x-x'|^\frac\alpha2+|t-t'|^\frac\alpha4\ri),
$$
and if $|t-t'|\le \tau$, then we can prove as in Case 1 that
$$
|\eta(x,t)-\eta(x',t')|\le 2A\tau^\frac\alpha4\lf(|x-x'|^\frac\alpha2+|t-t'|^\frac\alpha4\ri).
$$
In either case above we see that the claim is true.

Hence by the inverse function theorem we have $F(v)=0$ on $M\times[0, \tau]$ for sufficiently small $\tau$.  In particular $v$ solves \eqref{MA-special} and satisfies the conditions in the lemma $M\times[0, \tau]$ . The fact that $v$ is smooth follows from a standard bootstrapping argument applied to \eqref{MA-special} as at the end of the proof of Proposition \eqref{MA-shorttime-l1}.
\end{proof}

\begin{prop}\label{MA-shorttime-l1} Let $(M^n,g_0)$ be a smooth complete non-compact
K\"ahler manifold of bounded geometry of order $2+\alpha$. Let $u$ be a smooth function on $M$ and let $f$ and $\sigma=\sigma(t)$ be a smooth family of functions and real and closed (1,1) forms, respectively, on $M$ for $t\in [0, T)$.  Suppose that
\begin{itemize}
  \item [(i)] $ (\|du\|_{C^{1+\alpha}(M)}+|f_t|+\sup_{t\in[0, T)}\|df(t)\|_{C^{1+\alpha}(M)})<\infty$;
\item[(ii)] $\sigma \in C^{2+\alpha,1+\frac\a2}(M\times[0, T))$;
     \item [(iii)] $\sigma+\ii\partial\bar{\partial}u\ge c\omega_0$ on $M\times[0,T)$ for some $c>0$;
 \item[(iv)] $\log\dfrac{(\sigma+ \partial\bar{\partial}u)^n}{\omega_0^n}$ is in $C^{2+\alpha,1+\frac\a2}(M\times[0, T))$.
\end{itemize}
Then there exists $0<T'\leq T$ and a smooth solution $v$ to \eqref{MA} on $M\times[0, T']$ such that $(v-u+\int_0^t f(s)ds)\in C^{2+\alpha, 1+\alpha/2}(M\times[0, T'])$ and $\sigma+\ii\partial\bar{\partial}v$ are uniformly equivalent to $\omega_0$ in $M\times[0,T']$.
\end{prop}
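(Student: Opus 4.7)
The plan is to reduce Proposition \ref{MA-shorttime-l1} to the special case handled by Lemma \ref{shorttime-l1} via a shift of the unknown that absorbs both the initial data $u$ and the inhomogeneity $f$. Define
$$F(x,t):=\int_0^t f(x,s)\,ds,\qquad \tilde v:= v - u + F,\qquad \tilde\sigma(t):=\sigma(t) + \ii\ddbar u - \ii\ddbar F(\cdot,t).$$
Then $\sigma + \ii\ddbar v = \tilde\sigma + \ii\ddbar \tilde v$ and $\p_t \tilde v = \p_t v + f$, so $v$ solves \eqref{MA} if and only if $\tilde v$ solves
$$\frac{\p \tilde v}{\p t} = \log\frac{(\tilde\sigma + \ii\ddbar \tilde v)^n}{\omega_0^n},\qquad \tilde v(\cdot,0) = 0,$$
which is exactly \eqref{MA-special} with $\tilde\sigma$ in place of $\sigma$. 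It thus suffices to verify the three hypotheses of Lemma \ref{shorttime-l1} for $\tilde\sigma$.

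Hypotheses (i) and (ii) of the lemma are routine. Assumption (i) of the proposition yields $\ii\ddbar u\in C^{\alpha}(M)$ and $\ii\ddbar F\in C^{\alpha,\alpha/2}(M\times[0,T))$ (spatially $C^\alpha$ uniformly in $t$, and Lipschitz in $t$ since $\p_t F = f$ and $\|df(t)\|_{C^{1+\alpha}}$ is bounded uniformly in $t$); together with (ii) this gives $\tilde\sigma\in C^{\alpha,\alpha/2}$. At $t=0$ we have $\tilde\sigma(\cdot,0)=\sigma(0)+\ii\ddbar u\ge c\omega_0$ by (iii), while the $C^0$-bounds supplied by (i), (ii), and (iv) give an upper bound $\tilde\sigma(\cdot,0)\le C\omega_0$. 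After shrinking $T$ so that $\ii\ddbar F$ is a $C^0$-small perturbation, the two-sided bound $c'\omega_0\le\tilde\sigma\le C'\omega_0$ persists on $M\times[0,T)$.

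The one delicate point is hypothesis (iii) of Lemma \ref{shorttime-l1}, which requires a $v_0\in C^{2+\alpha,1+\alpha/2}$ with $v_0(\cdot,0)=0$ and
$$\p_t v_0(\cdot,0) = \log\frac{(\tilde\sigma(\cdot,0))^n}{\omega_0^n} = \log\frac{(\sigma(\cdot,0)+\ii\ddbar u)^n}{\omega_0^n} =: h(x).$$
This is precisely where assumption (iv) is needed: it guarantees $h\in C^{2+\alpha}(M)$, so that $v_0(x,t):=t\cdot h(x)$ lies in the required class and satisfies $w_0(\cdot,0)=h-h=0$. Lemma \ref{shorttime-l1} now produces $T'\in(0,T]$ and a solution $\tilde v\in C^{2+\alpha/2,1+\alpha/4}(M\times[0,T'])$ with $\tilde\sigma+\ii\ddbar \tilde v$ uniformly equivalent to $\omega_0$; then $v:=\tilde v+u-F$ solves \eqref{MA} on $M\times[0,T']$ with the stated uniform equivalence.

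Finally, I would upgrade the regularity from $C^{2+\alpha/2,1+\alpha/4}$ to $C^{2+\alpha,1+\alpha/2}$ and then to full smoothness. Since $\tilde\sigma+\ii\ddbar\tilde v$ is uniformly positive and $C^{\alpha/2,\alpha/4}$, differentiating the reduced PDE once in the directions of a quasi-coordinate chart gives linear parabolic equations on $D(r)\times[0,T']$ for the first spatial derivatives of $\tilde v$ whose coefficients are $C^{\alpha/2,\alpha/4}$ and whose right-hand sides are $C^{\alpha,\alpha/2}$ by (ii); applying the interior Schauder estimates uniformly across the quasi-coordinate neighborhoods upgrades $\tilde v$ to $C^{2+\alpha,1+\alpha/2}$, and continued bootstrapping using the smoothness of $\sigma$, $u$, $f$ delivers smoothness of $v$ on $M\times[0,T']$. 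The main obstacle in the whole argument is the construction of $v_0$, which is what dictates the inclusion of assumption (iv) in the hypotheses.
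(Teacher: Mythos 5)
Your proposal is correct and follows essentially the same route as the paper: the same substitution $\tilde v = v-u+\int_0^t f$, $\tilde\sigma=\sigma+\ii\ddbar u-\ii\ddbar\int_0^t f$, reduction to Lemma \ref{shorttime-l1} with a $v_0$ of the form $t\cdot(\text{log of a Monge--Amp\`ere quotient})$ supplied by hypothesis (iv), and the same Schauder bootstrapping in quasi-coordinates to upgrade to $C^{2+\alpha,1+\alpha/2}$ and then smoothness. Your time-frozen choice $v_0=t\,h(x)$ is a harmless (in fact slightly cleaner) variant of the paper's time-dependent $v_0$.
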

\begin{proof} Let $u$, $\sigma$ and $f$ be as in the Proposition and let

$$
\tilde \sigma_\ijb=\sigma_\ijb+(u-\int_0^tf(s)ds)_\ijb.$$  Then $\tilde\sigma\in C^{ \alpha, \frac\a2}(M\times[0, T'])$ and there is  $T'\in (0, T)$ and $c>0$ such that

$$
c\omega_0\ge\tilde \sigma\ge c\omega_0
$$
in $M\times[0,T']$. Let
$$
v_0=t\log\lf(\frac{(\tilde{\sigma}+\ii\ddbar u)^n}{\omega_0^n}\ri).
$$
Then $v_0$ satisfies condition (iii) in Lemma \ref{shorttime-l1} with $\sigma$ replaced by $\tilde \sigma$.
By Lemma \ref{shorttime-l1}, by choose a smaller $T'$ we may assume that there is $\tilde v\in C^{2+\frac \a2 ,1+\frac\a4}(M\times[0, T']$ solving \eqref{MA-special} (with $\sigma$ replaced by $\tilde \sigma$) such that $\tilde \sigma+\ii\ddbar v$ is uniformly equivalent to $\omega_0$ in $M\times[0,T']$.

Let $v=\tilde v+u-\int_0^tf(s)ds$. Then
$$
\sigma+\ii\ddbar v=\sigma+\ii\ddbar\tilde v+\ii\ddbar(u-\int_0^tf(s)ds)=\tilde\sigma+\ii\ddbar\tilde v
$$
and
$$
\frac{\p v}{\p t}=\frac{\p \tilde v}{\p t}-f.
$$
Hence $v$ is a solution to \eqref{MA}, $v$ is smooth and $\sigma+\ii\ddbar v$ is uniformly equivalent to $\omega_0$ in $M\times[0, T']$. It remains to prove that $\tilde v$ is actually in $C^{2+\a ,1+\frac\a2}(M\times[0, T'])$.

Around any $p\in M$,   pull the equation \eqref{MA} back into $D(r)$ by $\xi_p$ and differentiate the pullback equation with respect to $z^l$ say.   We then obtain the following in $D(r)\times[0,T]$
\begin{equation}\label{diffMA2}
\left\{%
\begin{array}{ll}

    \dfrac{\p v_l}{\p t}= (^v\sigma))^{i\jbar}(v_l)_{i\jbar} +(^v \sigma)^{i\jbar}(\sigma)_{i\jbar l}-
    (g_0)^{i\jbar}(g_0)_{i\jbar l}-f_l\\

   v_l(x,0)=u_l. \\
\end{array}%
\right.
\end{equation}
Thus \eqref{diffMA2} is a strictly parabolic equation for $v_l$ with initial condition $u_l$ being in $C^{3+\alpha}(D(\frac 34r))$  (by condition (i), (ii) and (iv)) such that $||u_l||_{3+\alpha,D(\frac34 r )}$ is bounded by a constant independent of $p$. Moreover, the $C^{\frac\a2,\frac\a4}$ norms of $(^v\sigma)^{i\jbar}$ and $(^v \sigma)^{i\jbar}(\sigma)_{i\jbar l}-
    (g_0)^{i\jbar}(g_0)_{i\jbar l}-f_l$ in $D(r)\times[0,T']$ are bounded by a constant independent of $t$.  It follows from parabolic Schauder theory  that the $C^{2+\frac\a2, 1+ \frac\alpha4}$ norm of $v_l$ in $(D(r/2)\times[0, T'])$ is bounded  by a constant independent of $p$.   Repeating the above argument with respect to a conjugate coordinate $z_{\bar{l}}$, we conclude that $C^{2+\frac\a2, 1+ \frac\alpha4}$ norms of   the first space derivatives of $v$ in $ D(\frac r2)\times[0, T'] $ are bounded by a constant independent of $p$. This implies in particular, that the $C^{\a,\frac\a2}$ norms of $(^v\sigma))^{i\jbar}$ and $(^v \sigma)^{i\jbar}(\sigma)_{i\jbar l}-
    (g_0)^{i\jbar}(g_0)_{i\jbar l}-f_l$ in $D(\frac r2)\times[0,T']$ are bounded by a constant independent of $t$ (see remark \ref{remfourthorder}). Repeating the above arguments, together with the fact that $\tilde v$ is uniformly bounded and the assumptions on $u$ and $f$ we conclude that $\tilde v$ is in the $C^{2+\a,1+\frac\a2}(M\times[0,T'])$.  By the bootstrapping argument above it is also not hard to see that $v$ is in fact smooth.
\end{proof}
\begin{rem}\label{remfourthorder}
In the second to last sentence in the proof above  we actually have that the $C^{\frac\a2,\frac\a4}$ norms of the first space derivatives of $(^v\sigma))^{i\jbar}$ and $(^v \sigma)^{i\jbar}(\sigma)_{i\jbar l}-
    (g_0)^{i\jbar}(g_0)_{i\jbar l}-f_l$ in $D(\frac r2)\times[0,T']$ are bounded by a constant independent of $t$.  Thus by the above argument  we in fact have that the $C^{2+\frac\a2, 1+ \frac\alpha4}$ norms of   the second space derivatives of $v$ in $ D(\frac r4)\times[0, T'] $ are bounded by a constant independent of $p$.
\end{rem}

 The shorttime existence of \eqref{KRF} was proved by Shi \cite{Sh1, Sh2} assuming $(M^n,g_0)$ is a  complete non-compact K\"ahler manifold with bounded curvature.  Proposition \ref{MA-shorttime-l1} re-establishes this fact under the additional assumption  of bounded geometry of order $2+\alpha$ such that the Ricci form is in $C^{2+\alpha}(M)$, with much shorter proof.  In particular, we have

 \begin{cor}
Let  $(M^n,g_0)$ be a complete non-compact K\"ahler manifold with bounded geometry of order $2+\alpha$ and the Ricci form is in $C^{2+\alpha}(M)$ for some $\alpha >0$.  Then there exists $T>0$ such that  \eqref{KRF} has a solution $g(t)$ on $M\times[0, T)$ such that for each $t$, $g(t)$ has bounded curvature and is equivalent to $g_0$.
\end{cor}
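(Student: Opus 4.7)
The plan is to reduce the short-time existence of \eqref{KRF} to an instance of \eqref{MA} covered by Proposition \ref{MA-shorttime-l1}. Following the strategy described in the introduction (and originally due to Tsuji in the compact setting), I would set $u\equiv 0$, $f\equiv 0$, and $\sigma(t)=\omega_0 - t\,\Ric(\omega_0)$, so that \eqref{MA} becomes
$$\frac{\p v}{\p t} = \log\frac{(\sigma(t)+\ii\ddbar v)^n}{\omega_0^n}, \qquad v(\cdot,0)=0.$$
The point of this choice is that if $v$ is a smooth solution with $\omega(t):=\sigma(t)+\ii\ddbar v$ positive, differentiating in $t$ and invoking the local identity $\Ric(\omega_0)-\Ric(\omega)=\ii\ddbar\log(\omega^n/\omega_0^n)$ immediately gives $\p\omega/\p t=-\Ric(\omega)$ and $\omega(0)=\omega_0$, so the associated metric $g(t)$ solves \eqref{KRF}.

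Next I would verify the four hypotheses of Proposition \ref{MA-shorttime-l1}. Hypothesis (i) is vacuous since $u=f\equiv 0$. For (ii), bounded geometry of order $2+\alpha$ gives $\omega_0\in C^{2+\alpha}(M)$, and by assumption $\Ric(\omega_0)\in C^{2+\alpha}(M)$; hence $\sigma\in C^{2+\alpha,1+\alpha/2}(M\times[0,T))$. For $T'>0$ sufficiently small the quantity $|t\,\Ric(\omega_0)|_{g_0}$ is uniformly small on $M\times[0,T']$, which gives (iii) with $u=0$ and makes $\sigma$ uniformly equivalent to $\omega_0$ on this interval; (iv) then follows because $\log(\sigma^n/\omega_0^n)$ is a smooth function of the components of $\sigma$, which are bounded away from zero in the quasi-coordinates. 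Proposition \ref{MA-shorttime-l1} then produces a smooth $v$ on some $M\times[0,T']$ with $v\in C^{2+\alpha, 1+\alpha/2}$ and $\omega(t)=\sigma(t)+\ii\ddbar v$ uniformly equivalent to $\omega_0$.

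The uniform equivalence is exactly the claim that $g(t)$ is equivalent to $g_0$. The remaining point, which I expect to be the main subtle step of the argument, is the uniform curvature bound on $g(t)$. In each quasi-coordinate of Definition \ref{boundedgeom} the Riemann tensor of $g(t)$ is a bounded rational expression in $g_{i\bar j}(t)$ and its first two space derivatives, that is, in $\sigma$, $\ddbar v$ and their first two space derivatives. Two spatial derivatives of $\sigma$ are controlled uniformly in $p$ by the $C^{2+\alpha}$ assumptions on $\omega_0$ and $\Ric(\omega_0)$; the fourth space derivatives of $v$ are controlled by Remark \ref{remfourthorder}, which upgrades the second space derivatives of $v$ to $C^{2+\alpha/2,1+\alpha/4}$ on half-size quasi-coordinates, with constants independent of the base point $p\in M$. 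This regularity promotion is really the only nontrivial ingredient: without it, $v\in C^{2+\alpha,1+\alpha/2}$ alone would give only $C^\alpha$ control on $g(t)$ and no pointwise control on the curvature.
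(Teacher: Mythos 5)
Your proposal is correct and follows essentially the same route as the paper, which simply applies Proposition \ref{MA-shorttime-l1} with $\sigma(t)=-t\Ric_0+\omega_0$ and $f=0$ (so $u=0$) and notes that $\omega(t)=\sigma(t)+\ii\ddbar v$ then solves \eqref{KRF}. The only difference is that you spell out the hypothesis verification and the curvature bound via Remark \ref{remfourthorder}, steps the paper leaves implicit.
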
\begin{proof} Apply  the Proposition to find a short time solution $v$ of \eqref{MA} with $\sigma(t)=-t\Ric_0+\omega_0$ and $f=0$.  Then $\omega(t)=\sigma(t)+\ii\ddbar v$ is the required solution  of the \KR flow.

\end{proof}

\section{A priori estimates}
Let $(M^n,g_0)$ be a smooth complete non-compact \K manifold with
bounded geometry of order $2+\alpha$ for some $0<\alpha<1$.
Let  $f$ and $\sigma=\sigma(t)$  be a family of smooth functions and \K forms
on $M\times[0, T)$ respectively.  Let
$f $ be a smooth
function on $M\times[0, T)$.  Let $v(x,t)$ be a smooth solution to the following
equation on $M\times[0,T)$
  \begin{equation}\label{MA1}
\left\{%
\begin{array}{ll}
    \dfrac{\p v}{\p t}= \log \dfrac{(\sigma+ \ii\partial\bar{\partial}v)^n}{ \omega_0^n}- f\ \ \text{in $M\times[0,T]$}
      \\
   v(x,0)=0  \\
\end{array}%
\right.
\end{equation}
where $\omega_0$ is the \K form of $g_0$.

 In this section, we want to obtain estimates on $v$.
 The derivations of the estimates are rather standard and along similar lines
 as in \cite{Y, Ca} (compact case) \cite {Chau04, ChauTam09, LZ} (non-compact case), except that we have a more simple proof for the second space derivatives of $v_t$ (see Lemma 3.4).

   Throughout the  section we will let $g_{k \bar{l}}(t)= (\sigma(t))_{k \bar{l}}+ v_{k\bar{l}}$.  We will use $\Delta$, $\nabla$, $|\cdot|$  and $\Delta_{\sigma}$, $\nabla_{\sigma}$, $|\cdot|_\sigma$ to denote the Laplacian , covariant derivatives and norms with respect to $g$ and $\sigma$ respectively.

  Let us recall some well-known results, see  \cite{Y,Chau04}.

\begin{lem}\label{higherorder-l1}
Let $h(t)$ be a smooth family of uniformly equivalent
complete \K  metrics on $M$ for $t\in[0,T)$ and let $u$ be
a smooth function on $M\times[0,T)$  such that  $\wt
h_\ijb=h_\ijb+u_\ijb$ is a family of complete \K metrics
uniformly equivalent to $h$ for all $t$.  In the following,
$\wt\nabla$, $\wt\Delta$ and $\nabla_h$, $\Delta_h$ are covariant derivatives and Laplacians  with respect to $\wt h$ and $h$ respectively.  Also, $|\cdot|$ will denote a norm with respect to $h$.
\begin{itemize}
  \item [(i)]
  \begin{equation}\label{Laplacian-e1}
    \tD\lf(\Delta_h u+n\ri)\ge \frac{\lf|\tn(\Delta_h u+n)\ri|^2}{(\Delta_h u+n )}-C(\Delta_h u+n )\tth^{\ijb}h_{\ijb}-h^{\ijb}\ttR_\ijb
  \end{equation}
where $C$ is a constant depending only on $n$ and a bound
on  the holomorphic bisectional curvature of $h$ and
$\ttR_\ijb$ is the Ricci curvature of $\tth$.
  \item[(ii)] Let $Q=\wt h^{i\bar j}\wt h^{k\bar l}\wt h^{m\bar n}u_{;i\bar
lm}u_{;\bar jk\bar n} $, where $;$ is the covariant
derivative with respect to $h$ and let
$$
F=\log\frac{\det(\wt h_{a\bar b})}{\det(h_{a\bar b})}.
$$
Then in normal coordinate with respect to $h$:
\begin{equation}\label{Q13}
  \begin{split}
  (\wt\Delta -\frac{\p}{\p t})Q\ge& (F_{;i\bar k m}-u_{t;i\bar
km})u_{;\bar ik\bar m}+(F_{;\bar i k \bar m}-u_{t ;\bar ik\bar  m})u_{;
i\bar k  m}\\
& +(u_{t;p\bar k}- F_{p\bar k})u_{;i\bar pm}u_{;\bar ik\bar m}
+(u_{t;p\bar i}- F_{p\bar i}) u_{;i\bar k m}u_{;\bar pk\bar m}
\\
& +|u_{;i\bar k m\bar a}- u_{;\bar kp\bar a}u_{;i\bar pm}|^2\\
& +|u_{;i\bar k ma}- \lf(u_{;i\bar pa}u_{;p\bar km}+u_{;m\bar
p a}u_{;i\bar kp}\ri)|^2 \\
& -C_1(n)\lf[\lf(|\nabla_h \Rm_h|+|\Rm_h|\ri)|u_\ijb|\,|u_{i\bar j k}| )+|\Rm_h|\,|u_{i\bar j k}|^2\ri]\\
& -C_2 |h_t||u_{i\bar j k}|^2
\end{split}
\end{equation}
where $\Rm_h$ is
the curvature tensor of $h$ and the last constant $C_2$
depends only on   the equivalence of $\tth$ and $h$ .
\end{itemize}
\end{lem}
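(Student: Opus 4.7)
The plan is to reduce both parts to pointwise tensorial computations at a fixed point in normal coordinates for $h$: part (i) is the parabolic version of the Aubin--Yau Laplacian estimate and part (ii) is the parabolic version of Calabi's third--order estimate. The $t$--dependence of $h$ contributes only lower--order terms, absorbed in (i) into the constant $C$ (via Lemma \ref{2+a-l1}(i)) and in (ii) into the explicit $|h_t|$ error.

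For (i), I would first rewrite $\Delta_h u + n = h^{i\bar\jmath}\tth_{i\bar\jmath} = \tr_h\tth$ and then compute $\tD \log(\tr_h\tth)$ at a chosen point in a normal frame for $h$ in which $\tth$ is diagonal. Differentiating the Monge--Amp\`ere identity $\log\det\tth-\log\det h=F$ twice expresses the trace $\tth^{k\bar l}\tth_{k\bar l;i\bar\jmath}$ in terms of $F_{;i\bar\jmath}$ and the Ricci of $h$, so that after contracting with $h^\ijb$ the leading term is $-h^\ijb\ttR_\ijb$. Commuting covariant derivatives brings in a bisectional curvature contraction of $h$ against $\tth$, which the curvature bound of $h$ converts into a $-C\,\tth^\ijb h_\ijb$ term in the estimate for $\tD\log(\tr_h\tth)$. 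The Yau Cauchy--Schwarz trick on the mixed third--derivative cross terms is what eliminates them from the $\log$ estimate. Multiplying back through by $(\tr_h\tth)$ turns $|\tn\log(\tr_h\tth)|^2$ into $|\tn(\Delta_h u+n)|^2/(\Delta_h u+n)$ and the curvature coefficient into $-C(\Delta_h u+n)\,\tth^\ijb h_\ijb$, giving \eqref{Laplacian-e1}.

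For (ii), I would expand $\tD Q$ at a point in normal coordinates for $h$, using repeatedly
\[
\tth^{i\bar\jmath}{}_{;p}=-\tth^{i\bar b}\tth^{a\bar\jmath}u_{;a\bar b p},\qquad \tth^{p\bar q}u_{;p\bar q k}=F_{;k},
\]
together with their conjugates. Commuting fourth covariant derivatives past one another introduces curvature remainders of the form $|\Rm_h|\,|u_{i\bar jk}|^{2}$ and $|\n_h\Rm_h|\,|u_\ijb|\,|u_{i\bar jk}|$, while the intrinsic positive fourth--order content of $\tD Q$ is then regrouped by completing the square to produce the two stated terms $|u_{;i\bar km\bar a}-u_{;\bar kp\bar a}u_{;i\bar pm}|^{2}$ and $|u_{;i\bar kma}-(u_{;i\bar pa}u_{;p\bar km}+u_{;m\bar pa}u_{;i\bar kp})|^{2}$. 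Subtracting $\p_t Q$ then splits into two parts: differentiating $\tth^{-1}$ in $t$ (and then using the Monge--Amp\`ere identity once more) produces the $(u_{t;p\bar k}-F_{;p\bar k})u_{;i\bar pm}u_{;\bar ik\bar m}$ and $(u_{t;p\bar i}-F_{;p\bar i})u_{;i\bar km}u_{;\bar pk\bar m}$ pairings, while differentiating the factors $u_{;i\bar lm}$ and $u_{;\bar jk\bar n}$ in $t$ produces the $(F_{;i\bar km}-u_{t;i\bar km})u_{;\bar ik\bar m}$ pairing and its conjugate. The piece of $\p_t\tth^{i\bar\jmath}$ coming from $\p_t h_{i\bar\jmath}$ alone produces exactly the $C_2|h_t|\,|u_{i\bar jk}|^{2}$ error.

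The main obstacle will be the bookkeeping in (ii): one must organise the many commutator and Monge--Amp\`ere remainders so that the four matched $(F_{;\cdots}-u_{t;\cdots})$ pairings and the two sum--of--squares terms emerge \emph{without} absorbing any positive fourth--order contribution, while the residual cubic quantities in $u_{;i\bar jk}$ line up exactly with the $|\Rm_h|$, $|\n_h\Rm_h|$ and $|h_t|$ terms in the statement. Since the lemma is presented as a recollection, I would, once the new parabolic contributions from $\p_t\tth$ and from $u_{t;i\bar km}$ are checked to enter precisely as displayed, refer the reader to the elliptic derivations in \cite{Y,Chau04} for the remaining algebraic core.
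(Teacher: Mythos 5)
Your proposal is correct and matches the paper's treatment: the paper states this lemma as a recollection of well-known results and gives no proof beyond citing \cite{Y,Chau04}, and your sketch is precisely the standard Aubin--Yau second-order computation for (i) and the (parabolic) Calabi third-order computation for (ii) underlying those citations, with the time-derivative contributions of $h$ and $u$ grouped exactly as in the displayed error terms.
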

 \begin{lem}\label{Deltabound1} Suppose $\Delta_\sigma v$ is bounded on $M\times[0,T']$ for all $T'<T$ and suppose there exist   $A_i$   such that in
 $M\times[0,T)$:
  $$ |v_t|\le A_1, \ |v|\le A_2,  |f|\le A_3,\ |\Delta_\sigma f|\le A_4,
    A_5^{-1}g_0\le \sigma\le A_5 g_0,\
     \ |\sigma_t|_\sigma\le A_6,
     $$
  the holomorphic bisectional curvature
  of $\sigma$ is bounded by $A_7$, and the Ricci curvature $R^0_\ijb$ of $g_0$ is bounded by
  $A_8$.

  Then there is a  positive constant  $C$
  depending only on $n$ and $A_1-A_8$  such that
  \begin{equation}\label{Deltabound2}
 C^{-1}
    \le n+\Delta_{\sigma} v\le C
\end{equation}
on $M\times[0, T)$
\end{lem}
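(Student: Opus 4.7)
The plan is to establish the two bounds on $w := n + \Delta_\sigma v = \mathrm{tr}_\sigma g$ separately, since they come from different mechanisms. The lower bound is elementary: if $\lambda_1,\ldots,\lambda_n$ are the eigenvalues of $g(t)$ relative to $\sigma(t)$, then by AM-GM
\[
w = \sum_i \lambda_i \ge n\Bigl(\prod_i \lambda_i\Bigr)^{1/n} = n(\det g/\det\sigma)^{1/n}.
\]
From the equation, $\log(\det g/\det g_0) = v_t + f$, so the hypotheses $|v_t|\le A_1$, $|f|\le A_3$ and $A_5^{-1}g_0\le\sigma\le A_5 g_0$ force $\det g/\det\sigma$ to be uniformly bounded between two positive constants depending only on $n, A_1, A_3, A_5$. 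This yields $w\ge C^{-1}$ and also the upper bound on $\det g/\det\sigma$ needed below.

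For the upper bound I would run a Yau-style second-order estimate via the parabolic maximum principle applied to
\[
H = \log w - A v - \varepsilon \rho,
\]
where $A$ is a large constant to be determined, $\varepsilon > 0$ is arbitrary, and $\rho\ge 1$ is the exhaustion function from Lemma~\ref{2+a-l1}(ii), which has bounded $g_0$-gradient and $g_0$-Hessian. The assumptions that $\Delta_\sigma v$ is bounded on $M\times[0,T']$ and $v$ is bounded, together with $\rho\to\infty$ at infinity, ensure that $H$ attains its supremum at some interior point $(x_0, t_0)$; since $v(\cdot,0)\equiv 0$ we may take $t_0 > 0$, so $(\Delta_g - \partial_t) H \le 0$ there. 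Applying Lemma~\ref{higherorder-l1}(i) with $h = \sigma$, $u = v$, $\tilde h = g$ yields
\[
\Delta_g \log w \ge -C\,\mathrm{tr}_g \sigma - \frac{\sigma^{i\bar j} R_{i\bar j}(g)}{w},
\]
with $C$ depending only on $n$ and $A_7$. Applying $\partial_i\partial_{\bar j}$ to $\log(\det g/\det g_0) = v_t + f$ gives $\sigma^{i\bar j} R_{i\bar j}(g) = -\Delta_\sigma v_t - \Delta_\sigma f + \sigma^{i\bar j} R^0_{i\bar j}$, and the critical feature is that the resulting $-\Delta_\sigma v_t/w$ contribution in $\Delta_g\log w$ exactly cancels the $\Delta_\sigma v_t/w$ contribution in
\[
\partial_t \log w = \frac{\Delta_\sigma v_t + (\partial_t\sigma^{i\bar j}) v_{i\bar j}}{w}.
\]
The remaining zero-order terms $\Delta_\sigma f/w$, $\sigma^{i\bar j} R^0_{i\bar j}/w$, $(\partial_t\sigma^{i\bar j})v_{i\bar j}/w$, and $Av_t$ are uniformly bounded via the hypotheses $A_1, A_4, A_6, A_8$ combined with the already-established lower bound on $w$, while $-\varepsilon\Delta_g\rho$ is absorbed using the pointwise estimate $|\Delta_g\rho|\le C_\rho\,\mathrm{tr}_g\sigma$. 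Together with $-A\Delta_g v = A(\mathrm{tr}_g\sigma - n)$, one obtains at $(x_0,t_0)$
\[
0 \ge (\Delta_g - \partial_t) H \ge (A - C - \varepsilon C_\rho)\,\mathrm{tr}_g\sigma - C_1,
\]
with $C_1$ independent of $\varepsilon$. Choosing $A$ large and $\varepsilon$ small forces $\mathrm{tr}_g\sigma(x_0, t_0) \le C_2$, and the pointwise matrix inequality $\mathrm{tr}_\sigma g \le \tfrac{n}{(n-1)^{n-1}}(\det g/\det\sigma)(\mathrm{tr}_g\sigma)^{n-1}$, combined with the upper bound on $\det g/\det\sigma$, converts this into $w(x_0, t_0) \le C_3$. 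Hence $\sup H \le C_4$, and letting $\varepsilon\to 0$ and using $|v|\le A_2$ produces $w\le C$ on $M\times[0,T)$.

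The main obstacle is the non-compactness of $M$, which prevents a direct application of the maximum principle and is circumvented by the $-\varepsilon\rho$ perturbation. The standing a priori hypothesis that $\Delta_\sigma v$ is bounded on $M\times[0,T']$ serves only to guarantee that $\sup H$ is attained in the interior; the final constant $C$ is independent of this bound. The analytic heart of the argument is the exact cancellation of the $\Delta_\sigma v_t/w$ terms between the Ricci contribution coming from Lemma~\ref{higherorder-l1}(i) and the time derivative of $\log w$, which is what keeps all the unknown higher-order data out of the final estimate.
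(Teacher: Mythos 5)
Your proof is correct and follows essentially the same route as the paper's: AM--GM plus the equation for the lower bound, then Yau's second-order estimate applied to $\log(n+\Delta_\sigma v)-Av$ with an $\e\rho$ perturbation to run the maximum principle on the non-compact manifold, the key point in both being that the $\Delta_\sigma v_t$ terms cancel between the Ricci term and $\partial_t\log(n+\Delta_\sigma v)$. The only (harmless) variants are that you close the argument by bounding $\mathrm{tr}_g\sigma$ at the maximum point and converting via the determinant inequality, whereas the paper uses $\sum_i(1+v_{i\bar i})^{-1}\ge c\exp[\frac{1}{n-1}(\log(n+\Delta_\sigma v)-(C_1+1)v)]$ to make the quantity itself appear on the right-hand side.
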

\begin{proof}
Let $w=-v_t$. We have
\begin{equation}\label{dubound-1}
\begin{split}
    n+\Delta_{\sigma} v=&{\sigma}^{\ijb}(({\sigma})_\ijb+v_\ijb)\ge \lf(\frac{\det
    ({\sigma}_\ijb+v_\ijb)}{\det({g_0})_\ijb}\ri)^\frac1n\\
    =&\exp\lf(\frac1n(f+v_t))\ri)\\
    \ge&
    C_0
    \end{split}
\end{equation}
where $C_0$ depends only on $n$ and $A_1$ and $A_3$.  From this the
first inequality in \eqref{Deltabound2} is true. On the
other hand, by Lemma \ref{higherorder-l1} at a point with
normal coordinate with respect to $\sigma(t)$ such that
$g_\ijb(t)=(\sigma(t))_\ijb+v_\ijb=\delta_{ij}(1+v_\ijb)$
is diagonal, we have
\begin{equation}
    \begin{split}
        \lf(  \Delta -\frac{\p}{\p t}\ri)&\lf[\log(\Delta_{\sigma} v+n) \ri]
         \\
        \ge& -C_1\sum_{i}\frac1{1+v_{i\bar i}}-\frac{\sigma^{\ijb}
        R_\ijb}{ \Delta_{\sigma} v+n}-\frac{\frac{\p}{\p t}\Delta_{\sigma} v}{\Delta_{\sigma} v+n}\\
       =&-C_1\sum_{i}\frac1{1+v_{i\bar i}}
        -\frac{\sigma^{i\jbar}R^0_{i\jbar}-\Delta_{\sigma}f -(\sigma_t^{\ijb})v_{\ijb}}{\Delta_{\sigma}v+n}\\
       \ge &-C_1\sum_{i}\frac1{1+v_{i\bar i}}-C_2
     \end{split}
\end{equation}
Here $C_1$ is a constant depending only on $n$ and $A_7$
and $C_2$ is a constant depending only on $n$, $A_1$,
$A_3$, $A_4$, $A_5$, $A_7$, $A_8$  where we have   used
\eqref{dubound-1}, the fact that $u_{i\bar{i}}>-1$ for
each $i$ and the fact that
$$
\Delta_\sigma
v_t=-\sigma^{\ijb}R_\ijb+\sigma^{\ijb}R^0_\ijb-\Delta_\sigma
f
$$

by  \eqref{MA1}.  Hence for any
$0\le t\le T'$
\begin{equation}\label{Deltabound3}
    \begin{split}
        &\lf(  \Delta -\frac{\p}{\p t}\ri)\lf[\log(\Delta_{\sigma} v+n)-(C_1+1)v \ri]
         \\
         &\ge -C_1\sum_{i}\frac1{1+v_{i\bar i}}-C_2-(C_1+1)\sum_{i}\frac{v_{i\bar i}}{1+v_{i\bar i}}+(C_1+1)v_t\\
         &=  \sum_{i}\frac1{1+v_{i\bar i}}-(C_1+1)n-C_2+(C_1+1)v_t\\
          &\ge\lf(\frac{\sum_i 1+v_{i\bar i}}{\prod_i(1+v_{i\bar i})}\ri)^\frac{1}{n-1}-(C_1+1)n-C_2+(C_1+1)v_t\\
         &\ge C_3\exp\lf[\frac1{n-1}\lf(\log(\Delta_{\sigma} v+n)-(C_1+1)v\ri)\ri]-C_4\\
          &\ge C_5\lf(\log(\Delta_{\sigma}u+n)-(C_1+1)v -C_6\ri)
     \end{split}
\end{equation}
where $C_3- C_6$ are positive constants depending only on $n$ and $A_1-A_8$, where we have used \eqref{dubound-1}.

  We now want to apply a maximum principle argument to \eqref{Deltabound3}.  The argument is basically similar to that in \cite{Sh2} where maximum principles were derived for the case where $g(t)$ is evolving by \KR flow, except that in our case we do not assume the curvature of $g$ is bounded.  Since $g_0$ has bounded curvature, we can find a function $\phi$ as in Lemma \label{2+a-l1} (ii). At a point we can find holomorphic coordinates such that $g_{\ijb}$ is diagonalized at this point with respect to $g_0$. Combining \eqref{dubound-1} with our assumption that $\Delta_{\sigma} v$ is a bounded function for each $t<T$, it follows that $g(t)$ and $g_0$ are uniformly equivalent for each $t<T$.  Then $\Delta \phi=g^{\ijb}\phi_{ij}=g^{i\bar i}\phi_{i\bar i}$ is bounded in $M\times[0,T']$ for all $T'<T$. Hence $\Delta\phi\le \alpha \phi$, where $\alpha$ may depending on $T$. Consider $h=e^{\alpha t}\phi$ we have
$$
 (\Delta-\frac{\p}{\p t})h\le 0.
 $$

Consider the function
$$H
=  \lf(\log(\Delta_{\sigma}u+n)-(C_1+1) -C_6\ri)-\e h
$$ for $\e>0$  Then

$$
 (\Delta-\frac{\p}{\p t})H\ge C_5\lf(\log(\Delta_{\sigma}v+n)-(C_1+1)v+  -C_6\ri)
 $$
for $T\ge t>0$.
On $M\times[0,T]$, $H$ is bounded from above and will tend to $-\infty$ at infinity by \eqref{dubound-1} and the fact that $v$ is bounded.   $H$ must attains its  maximum. Hence by the maximum principle, $H$ cannot attain positive maximum at $t>0$. Hence
$$
\sup_{M\times[0,T]}H\le\max\{ \log n-C_6,0\}.
$$
The second inequality in \eqref{Deltabound3} is true by letting $\e\to0$.
\end{proof}
\begin{cor}\label{eqivalent1} Assume the hypothesis and notation in Lemma \ref{Deltabound1}. Then here exists a positive
constants $C>0$ depending on $n$ and $A_1-A_8$ in Lemma \ref{Deltabound1}  such that
\begin{equation}\label{equivalent2}
    C^{-1}  g_0\le g\le C  g_0,\text{ and } |v_\ijb|_\sigma\le C
\end{equation}
on $M\times[0,T)$.
\end{cor}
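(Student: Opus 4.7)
The plan is to combine the two-sided bound $C^{-1}\le n+\Delta_\sigma v\le C$ from Lemma \ref{Deltabound1} with the determinant bound coming directly from \eqref{MA1}, and then pinch the eigenvalues of $g$ relative to $\sigma$.

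First I would reinterpret Lemma \ref{Deltabound1}: since $n+\Delta_\sigma v=\sigma^{i\bar j}(\sigma_{i\bar j}+v_{i\bar j})=\mathrm{tr}_\sigma g$, we have $\mathrm{tr}_\sigma g\le C$. Next, I would use \eqref{MA1} in the form $v_t+f=\log(\det g/\det g_0)$; since $|v_t|\le A_1$ and $|f|\le A_3$, this pinches $\det g/\det g_0$ between positive constants. Because $A_5^{-1}g_0\le\sigma\le A_5 g_0$, it follows that $\det g/\det\sigma$ is also pinched between positive constants depending only on $n,A_1,A_3,A_5$.

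Now, at any given point $p\in M$ I would choose unitary coordinates for $\sigma(t)$ which simultaneously diagonalize $g(t)$, with eigenvalues $\lambda_1,\ldots,\lambda_n>0$. The trace bound says $\sum_i\lambda_i\le C$ and the determinant bound says $c_1\le\prod_i\lambda_i\le c_2$. Positivity combined with the trace bound gives $\lambda_i\le C$ for every $i$; then $\lambda_i=(\prod_j\lambda_j)/\prod_{j\ne i}\lambda_j\ge c_1/C^{n-1}>0$. Hence every eigenvalue is pinched between two positive constants depending only on $n$ and $A_1$–$A_8$, which is exactly $C^{-1}\sigma\le g\le C\sigma$. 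Using hypothesis (the equivalence of $\sigma$ and $g_0$) once more, we conclude $C^{-1}g_0\le g\le C g_0$.

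For the second conclusion $|v_{i\bar j}|_\sigma\le C$, I would simply write $v_{i\bar j}=g_{i\bar j}-\sigma_{i\bar j}$; in the diagonal frame above, the $\sigma$-norm of $g_{i\bar j}$ is $\sqrt{\sum_i\lambda_i^2}$, which is controlled by the trace bound, while the $\sigma$-norm of $\sigma_{i\bar j}$ is just $\sqrt{n}$. Together these yield the desired uniform bound on $|v_{i\bar j}|_\sigma$.

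I do not anticipate a genuine obstacle: all the analytic content is contained in Lemma \ref{Deltabound1} and the determinant identity from \eqref{MA1}. The only step requiring any care is tracking that both the upper bound on $\sum\lambda_i$ and the lower bound on $\prod\lambda_i$ depend only on the listed constants $A_1$–$A_8$ (and not on $T$), so that the final $C$ is of the required form.
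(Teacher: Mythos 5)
Your proposal is correct and is essentially the argument the paper intends: the corollary is stated without proof precisely because it follows from Lemma \ref{Deltabound1} by the standard pinching you describe (trace of $g$ with respect to $\sigma$ bounded above by the lemma, determinant ratio $\det g/\det g_0=e^{v_t+f}$ pinched by $A_1,A_3$, hence all eigenvalues of $g$ relative to $\sigma$ bounded above and below, and $v_{i\bar j}=g_{i\bar j}-\sigma_{i\bar j}$ then bounded in $\sigma$-norm). All constants you use depend only on $n$ and $A_1$--$A_8$, as required.
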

\begin{lem}\label{3rdorder1-l1} Assume the hypothesis and notation in Lemma \ref{Deltabound1}.
In addition assume there exist  $A_9$ and $A_{10}$   such that on $M\times[0,T)$,
$$
|\nabla_\sigma^2f|_\sigma+|\nabla_\sigma^3f|_\sigma\le A_9, \ |\Rm^\sigma|+|\nabla_\sigma\Rm^\sigma|\le A_{10},
$$
where $\Rm^\sigma$ is the curvature tensor of $\sigma$.  Suppose that $|v_{;\ijb k}|_\sigma$ is bounded on $M\times[0,T']$ for all $T'<T$.

Then   there is a constant $C$ depending only $n$ and $A_1-A_{10}$   such that

\begin{equation}\label{3rdorder1}
Q=  g^{i\bar j}   g^{k\bar l}   g^{m\bar n} v_{;i\bar
lm}v_{;\bar jk\bar n}\le C
\end{equation}
on $M\times [0,T)$.
\end{lem}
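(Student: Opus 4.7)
The strategy is the parabolic Calabi/Yau third-order estimate: derive a differential inequality for a combined quantity $H=Q+A(\Delta_\sigma v+n)$ of the form $(\tilde\Delta-\partial_t)H\ge c^{-1}Q-C$ and apply a maximum principle adapted to non-compact $M$ via the cutoff from Lemma \ref{2+a-l1}(ii).

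\textbf{Step 1: apply Lemma \ref{higherorder-l1}(ii) with $h=\sigma$, $\tilde h=g$, $u=v$.} From \eqref{MA1} one has $F:=\log(\det g_{a\bar b}/\det\sigma_{a\bar b})=v_t+f+\log(\det g_0/\det\sigma)$, so the differences $F_{;i\bar km}-u_{t;i\bar km}$ and $u_{t;p\bar k}-F_{p\bar k}$ appearing in \eqref{Q13} are built only from covariant $\sigma$-derivatives of $f$ and of $\sigma$, hence uniformly bounded by the assumed constants $A_9,A_{10}$. The remaining curvature and $h_t$-dependent terms on the last two lines of \eqref{Q13} are controlled by $A_6, A_{10}$ and by the bound on $|v_{i\bar j}|_\sigma$ from Corollary \ref{eqivalent1}. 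Discarding the non-negative squared fourth-derivative terms and using Young's inequality on the linear-in-$|v_{i\bar jk}|$ remainder yields
$$(\tilde\Delta-\partial_t)Q\ge -C_1(1+Q)\quad\text{on }M\times[0,T),$$
with $C_1$ depending only on $n$ and $A_1$--$A_{10}$.

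\textbf{Step 2: coercive auxiliary quantity.} This alone is not sufficient. Following the classical Yau computation in a coordinate frame diagonalizing $g$ with respect to $\sigma$, and using $\log\det g-\log\det g_0=v_t+f$ from \eqref{MA1} to replace the fourth-order term $\sum_p v_{p\bar p a\bar a}/(1+v_{p\bar p})$, one obtains
$$(\tilde\Delta-\partial_t)(\Delta_\sigma v+n)\ge c_1\sum_{i,k,l}\frac{|v_{;i\bar kl}|^2}{(1+v_{i\bar i})(1+v_{k\bar k})(1+v_{l\bar l})}-C_2\ge c_2Q-C_2,$$
the last inequality from the uniform equivalence of $g$ and $\sigma$ (Corollary \ref{eqivalent1}) and absorption of the $\tilde h^{ij}h_{ij}$, Ricci, and $\sigma_t$ corrections into $C_2$ via $A_4,A_6,A_7,A_8,A_9$. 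Choosing $A>0$ with $Ac_2>2C_1$ and combining with Step 1 gives, for $H=Q+A(\Delta_\sigma v+n)$,
$$(\tilde\Delta-\partial_t)H\ge C_3^{-1}Q-C_3\quad\text{on }M\times[0,T),$$
with $C_3$ depending only on $n$ and $A_1$--$A_{10}$.

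\textbf{Step 3: maximum principle with spatial cutoff.} Let $\rho$ be the function from Lemma \ref{2+a-l1}(ii); its $g_0$-gradient and Hessian are bounded, hence so are $|\tilde\nabla\rho|$ and $|\tilde\Delta\rho|$ by Corollary \ref{eqivalent1}. Pick $\lambda>0$ large enough that $(\tilde\Delta-\partial_t)(e^{\lambda t}\rho)\le 0$, and for $\epsilon>0$ set $H_\epsilon=H-\epsilon e^{\lambda t}\rho$. The standing assumption that $|v_{;i\bar jk}|_\sigma$ is bounded on each $M\times[0,T']$, combined with Corollary \ref{eqivalent1} and $\rho\to\infty$ at infinity, forces $H_\epsilon$ to attain its supremum on $M\times[0,T']$ at either $t=0$ or an interior point. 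At $t=0$, $H_\epsilon\le An$ since $v(\cdot,0)\equiv 0$. At any interior maximum $(x_0,t_0)$, the Step 2 inequality gives $Q(x_0,t_0)\le C_3^2$, hence $H_\epsilon(x_0,t_0)$ is bounded in terms of $n$ and $A_1$--$A_{10}$ only. Letting $\epsilon\to 0$ and then $T'\to T$ produces the asserted bound on $Q$.

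\textbf{Main obstacle.} The core technical difficulty is Step 2: extracting the correct good quadratic-in-third-derivatives term from $(\tilde\Delta-\partial_t)(\Delta_\sigma v+n)$ while properly accounting for the fact that $\sigma$ is neither flat nor fixed in time, so that the curvature corrections and the $\sigma_t$ contribution genuinely absorb into $-C_2$ rather than producing additional positive multiples of $Q$ that would threaten the choice of $A$. Once the coercive inequality is in hand, Step 3 is a routine non-compact maximum-principle argument using the \textit{a priori} assumption that $|v_{;i\bar jk}|_\sigma$ is bounded on each compact time interval.
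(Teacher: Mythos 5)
Your proposal is correct and follows essentially the same route as the paper: Lemma \ref{higherorder-l1}(ii) for $(\Delta-\partial_t)Q\ge -C_1Q-C_2$, the computation $(\Delta-\partial_t)(\Delta_\sigma v+n)\ge C_3Q-C_4$ to supply the coercive term, a linear combination of the two, and the non-compact maximum principle with the exhaustion function as in Lemma \ref{Deltabound1}. The only cosmetic difference is that the paper arranges the combined inequality to be coercive in $H$ itself rather than in $Q$, which changes nothing in the maximum-principle step.
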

\begin{proof} In the following $C_i$'s denote positive constants depending only on $n$ and $A_1-A_{10}$.  For $0\le t<T$, by Lemma \ref{higherorder-l1} we have that in
normal coordinates with respect to $\sigma$,
\begin{equation}\label{3rdorder2}
\begin{split}
   \big( \Delta -&\frac{\p }{\p t}\big)Q\\
    \ge& (F_{;i\bar k m}-v_{t;i\bar
km})v_{;\bar ik\bar m}+(F_{;\bar i k \bar m}-v_{t ;\bar ik\bar  m})v_{;
i\bar k  m}\\
& +(v_{t;p\bar k}- F_{p\bar k})v_{;i\bar pm}v_{;\bar ik\bar m}
+(v_{t;p\bar i}- F_{p\bar i}) v_{;i\bar k m}v_{;\bar pk\bar m}
\\
& +|v_{;i\bar k m\bar a}- v_{;\bar kp\bar a}v_{;i\bar pm}|^2_\sigma\\
& +|v_{;i\bar k ma}- \lf(v_{;i\bar pa}v_{;p\bar km}+v_{;m\bar
p a}v_{;i\bar kp}\ri)|^2_\sigma \\
& -C(n)\lf[\lf(|\nabla_\sigma \Rm^{\sigma }|_\sigma+|\Rm^{\sigma }|_\sigma\ri)|v_\ijb|_\sigma\,|v_{i\bar j k}|_\sigma )+|\Rm^{\sigma}|\,|v_{i\bar j k}|^2_\sigma\ri]\\
& +C_0|v_{i\bar j k}|^2_\sigma\\
\ge& -C_1Q-C_2  +|v_{;i\bar k m\bar a}- v_{;\bar kp\bar a}v_{;i\bar pm}|^2_\sigma\\
& +|v_{;i\bar k ma}- \lf(v_{;i\bar pa}v_{;p\bar km}+v_{;m\bar
p a}v_{;i\bar kp}\ri)|^2_\sigma.
\end{split}
\end{equation}
On the other hand, direct computations show:
\begin{equation}\label{3rdorder2}
\begin{split}
   \big( \Delta -&\frac{\p }{\p t}\big)\lf(\Delta_{\sigma}v+n\ri)\\
    =& g^{k\bar l}g^{p\bar q}v_{;p\bar i l}v_{i\bar q\bar k}-g^{k\bar l}g_\ijb  R^\sigma_{i\bar jk\bar l}-\sigma^{\ijb} R_\ijb-\Delta_\sigma v_t-(\sigma^\ijb)_tv_\ijb\\
    = &g^{k\bar l}g^{p\bar q}v_{;p\bar i l}v_{i\bar q\bar k}-g^{k\bar l}g_\ijb  R^\sigma_{i\bar jk\bar l}-\sigma^{\ijb} R^0_\ijb+\Delta_\sigma f-(\sigma^\ijb)_tv_\ijb\\
    \ge &C_3Q-C_4
\end{split}
\end{equation}
where we have used \eqref{MA1} and Corollary \ref{eqivalent1}. Hence by Lemma \ref{Deltabound1}
\begin{equation}\label{thirdorderbound}
\big( \Delta -\frac{\p }{\p t}\big)\lf(Q+ \frac{C_1+1}{C_3} \lf(\Delta_{\sigma}v+n\ri)\ri)\ge Q+\frac{C_1+1}{C_3} \lf(\Delta_{\sigma}v+n\ri)-C_5.
\end{equation}
By assumption and Corollary \ref{eqivalent1}, $Q+\frac{C_1+1}{C_3} \lf(\Delta_{\sigma}u+n\ri)$ is bounded on $M\times[0,T']$ for all $T'<T$. As in the proof of Lemma \ref{Deltabound1}, we can conclude that the lemma is true by the maximum principle.
\end{proof}
Now we want to estimate the function $w=-v_t$.   We begin by noting the following estimate which follows immediately from Lemma \ref{3rdorder1-l1}, \eqref{MA1} and \eqref{equivalent2}:
\begin{cor}\label{gradientf}  With the same assumptions and notations as in Lemma \ref{3rdorder1-l1}.  In addition, suppose there exist  $A_{11}$ and $A_{12}$   such that on $M\times[0,T)$
$$
|\nabla_0\sigma|_{\sigma}\le A_{11},\ |\nabla_\sigma f|_\sigma\le A_{12}
$$
where $\nabla_0$ is the covariant derivative with respect to $g_0$.
Then there exists a positive
constants $C>0$ depending on $n$ and  and $A_1-A_{12}$  such that $|\nabla w|^2 \leq C$ on $M\times [0,T)$.
\end{cor}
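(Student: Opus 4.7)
The plan is to differentiate the Monge--Amp\`ere equation \eqref{MA1} in space and show that every resulting term is already controlled. From \eqref{MA1} we may write
\[
w = -v_t = f - \log\frac{\det g_{i\bar j}}{\det (g_0)_{i\bar j}},
\]
so in any local holomorphic coordinate chart
\[
\partial_k w = \partial_k f - g^{i\bar j}\partial_k g_{i\bar j} + (g_0)^{i\bar j}\partial_k (g_0)_{i\bar j}.
\]
The piece $|\nabla f|_\sigma$ is bounded by $A_{12}$, so the remaining two determinant terms are what must be controlled.

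To bound them I would fix an arbitrary $p\in M$ and choose holomorphic normal coordinates for $\sigma$ at $p$, so that the Christoffel symbols of $\sigma$ vanish at $p$ and partial derivatives of tensor components agree with $\sigma$-covariant derivatives there. Since $\partial_k\sigma_{i\bar j}(p)=0$, we have $g^{i\bar j}\partial_k g_{i\bar j}(p) = g^{i\bar j}v_{;i\bar j k}(p)$, which is bounded in absolute value by Cauchy--Schwarz together with Corollary \ref{eqivalent1} and Lemma \ref{3rdorder1-l1} (the latter bounding $|v_{;i\bar j k}|_\sigma$ via $Q$). Similarly, $\partial_k (g_0)_{i\bar j}(p) = (\nabla_\sigma g_0)_{ki\bar j}(p)$; using the standard \K identity that the difference of the Chern connections of $\sigma$ and $g_0$ is a tensor of the form $\sigma^{k\bar l}(\nabla_0 \sigma)_{i j\bar l}$, we see that $|\nabla_\sigma g_0|_\sigma$ is controlled by $|\nabla_0 \sigma|_\sigma \le A_{11}$ together with the uniform equivalence \eqref{equivalent2}. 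Summing the three bounds gives $|\nabla w|_\sigma(p)\le C$, and since $p$ was arbitrary this holds on all of $M\times[0,T)$.

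The only real care is the translation between $\sigma$-connections, $g_0$-connections and the corresponding pointwise norms, but once the uniform equivalence of Corollary \ref{eqivalent1} is in hand this is entirely routine. There is no serious analytic obstacle in the corollary itself; the substantive work has already been done in Lemma \ref{3rdorder1-l1}, which is precisely why the paper states this estimate as an immediate corollary.
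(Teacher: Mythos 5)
Your argument is correct and is exactly the route the paper intends: the authors state the corollary "follows immediately from Lemma \ref{3rdorder1-l1}, \eqref{MA1} and \eqref{equivalent2}," and your computation simply fills in the details — expressing $\nabla w$ via the logarithmic derivative of the determinants, controlling $g^{i\bar j}\partial_k g_{i\bar j}$ by $Q$ and the metric equivalence, and converting $\partial_k(g_0)_{i\bar j}$ into $\nabla_0\sigma$ via the difference of Chern connections. No gap; the final passage from a $\sigma$-norm bound to the $g$-norm bound claimed in the statement is immediate from Corollary \ref{eqivalent1}.
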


Now we want to estimate higher order derivatives ow $w$.  By \eqref{MA1} we note that

\begin{equation}\label{f-e1}
\lf(\Delta -\frac{\p}{\p t}\ri)w= g^{i\jbar}   (\sigma_t)_{i\jbar}-f_t:=F.
\end{equation}
and
\begin{equation}\label{f-e2}
w_\ijb=R_\ijb-R^0_\ijb+f_{\ijb}=R_\ijb-\Omega_\ijb,
\end{equation}
where $\Omega_\ijb= R^0_\ijb-f_\ijb$.
Now we compute $\lf(\Delta-\frac{\p}{\p t}\ri)|\nabla w|^2$. In normal coordinates with respect to $g$  we have

\begin{equation}\label{f-e3}
\begin{split}
   \Delta (|\nabla w|^2) & =g^{\ijb}\lf(w_k w_{\bar l} g^{k\bar l}\ri)_{\ijb}\\
     &=g^{\ijb}g^{k\bar l}\lf(w_{ki}w_{\bar l\bar j} +w_{k\bar j}w_{\bar l i} +w_{ki\bar j}w_{\bar l}+w_kw_{\bar l\ijb}\ri)+g^{\ijb} w_k w_{\bar l} \lf(g^{k\bar l}\ri)_{\ijb}\\
     &=\sum_{i,j}\lf(|w_{ij}|^2+|w_{\ijb}|^2\ri)+(\Delta w)_i w_{\bar i}+(\Delta w)_{\bar i} w_i+R_{\ijb}w_iw_{\bar j}\\
     &=\sum_{i,j}\lf(|w_{ij}|^2+|w_{\ijb}|^2\ri)+(\Delta w)_i w_{\bar i}+(\Delta w)_{\bar i} w_i+w_{\ijb}w_iw_{\bar j}+\Omega_\ijb w_i w_{\bar j}
 \end{split}
 \end{equation}
 where we have used \eqref{f-e2}.

 \begin{equation}\label{f-e4}
\begin{split}
   \frac{\p }{\p t} (|\nabla w|^2) & =g^{\ijb}\lf(  w_{it}w_{\bar j}+w_iw_{\bar j t}\ri)+(g^{\ijb})_tw_i w_{\bar j}\\
   &=(w_t)_i w_{\bar i}+(w_t)_{\bar i}w_i +(w_{\ijb}- \sigma_{i\jbar}) w_iw_{\bar j}\\
 \end{split}
\end{equation}
Hence
\begin{equation}\label{f-e5}
\begin{split}
   \lf(\Delta-\frac{\p }{\p t}\ri) (|\nabla w|^2)   =&\sum_{i,j}\lf(|w_{ij}|^2+|w_{\ijb}|^2\ri)+(\Omega_\ijb +\sigma_{i\jbar}w_i w_{\bar j})\\
   &+F_iw_{\bar{i}}+F_{\bar{i}}w_i.
 \end{split}
\end{equation}

\begin{lem}\label{S-e6}  Assume the hypothesis and notation in Corollary \ref{gradientf}.   Assume in addition that there exist $A_{13}$ and $A_{14}$ such that on $M\times[0,T)$,
$$
|\nabla_0\sigma_t|_\sigma+|\nabla^2_0\sigma_t|_\sigma\le A_{13},\ |(f_t)_\ijb|_\sigma\le A_{14}.
$$
Suppose that $S:=g^{i\bar l}g^{k\bar j}w_{\ijb}w_{k\bar l} $ is bounded on $M\times[0,T']$ for all $T'<T$.

Then there is a constant $C$ depending only on $n$ and $A_1-A_{14}$, such that $S\le C$ on $M\times[0,T).$
\end{lem}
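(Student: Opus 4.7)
The plan is to apply a maximum principle argument to a non-linear combination of $S$ and $|\nabla w|^2$, with the distance-like function $\rho$ from Lemma \ref{2+a-l1}(ii) as a barrier to handle the non-compactness of $M$.

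First, I would compute $(\Delta-\partial_t)S$ via a Bochner-type identity. Differentiating the PDE $(\Delta-\partial_t)w = F$ of \eqref{f-e1} twice spatially and commuting covariant derivatives yields an expression for $(\Delta-\partial_t)w_{\ijb}$ involving the curvature of $g$ and $\partial_t g$. Taking the $g$-inner product with $\bar w_{\ijb}$, substituting $R_{\ijb} = w_{\ijb}+\Omega_{\ijb}$ from \eqref{f-e2} whenever Ricci appears, and using Corollary \ref{eqivalent1}, Lemma \ref{3rdorder1-l1}, Corollary \ref{gradientf}, and the assumed bounds $A_1$--$A_{14}$, one obtains a schematic inequality
\begin{equation*}
(\Delta-\partial_t)S \geq c_0\,|\nabla w_{\ijb}|^2_g - c_1 S^{3/2} - c_2 S - c_3,
\end{equation*}
where the cubic $S^{3/2}$ originates from the Ricci contraction $R_{i\bar l}(w^2)_{i\bar l}$ converted via \eqref{f-e2}.

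Second, to convert this to a linear-in-$S$ differential inequality, I work with $H_0 := (S+K)^{1/2}$ for $K\geq 1$. The Kato-type inequality $|\nabla S|^2 \leq C_K\, S\,|\nabla w_{\ijb}|^2_g$ together with the chain rule
\begin{equation*}
(\Delta-\partial_t)(S+K)^{1/2} = \frac{(\Delta-\partial_t)S}{2(S+K)^{1/2}} - \frac{|\nabla S|^2}{4(S+K)^{3/2}}
\end{equation*}
yields $(\Delta-\partial_t)H_0 \geq -\tfrac{c_1}{2}S - c_4$ after absorbing the Kato correction into the positive $|\nabla w_{\ijb}|^2_g$ term. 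Combining with \eqref{f-e5}, namely $(\Delta-\partial_t)|\nabla w|^2 \geq S - C_1$, and setting $H := H_0 + A|\nabla w|^2$ with $A > c_1/2$, one obtains
\begin{equation*}
(\Delta-\partial_t)H \geq (A - \tfrac{c_1}{2})S - C'.
\end{equation*}

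Finally, I apply the maximum principle. Since $|\nabla w|^2 \leq B$ globally by Corollary \ref{gradientf} and $S$ is bounded on $M\times[0,T']$ for each $T'<T$ by hypothesis, $H$ is bounded on each such slice. Let $H_\epsilon := H - \epsilon\rho$, so $H_\epsilon\to-\infty$ at spatial infinity and attains its supremum at some $(x_0,t_0)\in M\times[0,T']$. If $t_0=0$, the bound follows from the initial datum $w_{\ijb}(x,0) = f_{\ijb}(x,0)$, controlled by hypothesis $A_9$. Otherwise $(\Delta-\partial_t)H_\epsilon(x_0,t_0)\leq 0$; since $\Delta\rho$ is bounded (Corollary \ref{eqivalent1}), we conclude $S(x_0,t_0)\leq C''$ uniformly in $T'$ and $\epsilon$. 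This bounds $H(x_0,t_0)$, so $H(x,t)\leq H_\epsilon(x_0,t_0)+\epsilon\rho(x)\leq C_*+\epsilon\rho(x)$ for all $(x,t)$; letting $\epsilon\to 0$ and then $T'\to T$ yields the uniform bound on $S$. The principal obstacle is the cubic $S^{3/2}$ term in Step 1, tamed by the square-root transformation in Step 2; the ``simpler proof'' referred to by the authors exploits \eqref{f-e5} directly to produce $S$ as a good term in the evolution of $|\nabla w|^2$, bypassing more elaborate auxiliary quantities used in the earlier literature.
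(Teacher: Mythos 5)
Your overall architecture (Bochner formula for $S$, square-root transformation to tame the cubic term, coupling with $|\nabla w|^2$ via \eqref{f-e5} to generate a good $+S$ term, and the barrier $\rho$ for the maximum principle) matches the paper's. But your schematic inequality
$(\Delta-\partial_t)S \geq c_0|\nabla w_{\ijb}|^2 - c_1 S^{3/2} - c_2 S - c_3$
has a genuine gap: it omits the terms involving the \emph{full Riemann curvature tensor of the evolving metric} $g$. Commuting derivatives in $(\Delta-\partial_t)w_{\ijb}$ produces the term $R_{l\bar k\ijb}w_{k\bar l}$ (see \eqref{S2}), which contributes $2w_{\ijb}R_{l\bar k\ijb}w_{k\bar l}\gtrsim -C\,S\,|\Rm|$ to $(\Delta-\partial_t)S$; moreover $F_{\ijb}$ contains $(g^{k\bar l})_{\ijb}(\sigma_t)_{k\bar l}$, and by \eqref{curvature1} one only gets $|F_\ijb|\le C(|\Rm|+1)$. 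The curvature of $g$ involves fourth space derivatives of $v$, which are \emph{not} controlled by $S$, $Q$, or any of the hypotheses $A_1$--$A_{14}$ (only the background $\sigma$ has bounded curvature). So after the square-root transformation you are left with $(\Delta-\partial_t)(1+S)^{1/2}\ge -C_1(S+|\Rm|^2+1)$, and the $-|\Rm|^2$ term cannot be absorbed by the $+S$ produced from $|\nabla w|^2$.

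The paper closes this hole by adding a large multiple of the third-order quantity $Q=g^{i\bar j}g^{k\bar l}g^{m\bar n}v_{;i\bar lm}v_{;\bar jk\bar n}$ to the test function: the complete-square terms in \eqref{Q13}, combined with \eqref{curvature1}, yield $(\Delta-\partial_t)Q\ge C(n)|\Rm|^2-C_4$ (equation \eqref{S5}), so the combination $(1+S)^{1/2}+C_5|\nabla w|^2+C_6Q$ satisfies a clean inequality \eqref{S6} to which the maximum principle applies. Your argument as written would go through only under the additional (unjustified) assumption that $|\Rm(g(t))|$ is uniformly bounded; to repair it you must incorporate $Q$ (or some equivalent device producing a good $+|\Rm|^2$ term) into the auxiliary function. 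A secondary small inaccuracy: at $t=0$ one has $w(x,0)=f(x,0)-\log(\sigma(0)^n/\omega_0^n)$, not just $f(x,0)$, so the initial bound on $S$ uses the regularity of $\sigma(0)$ and $g_0$ as well as $A_9$.
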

\begin{proof}
 In the following $C_i$ will denote positive constants depending only on $n$ and $A_1-A$.  By \eqref{f-e1} and   Lemma 2.1 in \cite{NiTam2003}
\begin{equation}\label{S2}
\begin{split}
(\Delta - \frac{\p }{\p t})w_\ijb&=  R_{l\bar k\ijb}w_{k\bar
l}-\frac12(  R_{i\bar p}w_{p\bar j}+  R_{p\bar j}w_{i\bar
l})+F_{i\jbar}\\
&= R_{l\bar k\ijb}w_{k\bar l}-\frac12(w_{i\bar p}w_{p\bar
j}+w_{p\bar j}w_{i\bar l})-\frac12(\Omega_{i\bar p}w_{p\bar
j}+\Omega_{p\bar j}w_{i\bar l})+F_{i\jbar}\\
\end{split}
\end{equation}
where we have used \eqref{f-e2}.
In normal coordinates with respect to $g$:
\begin{equation}\label{S3}
\begin{split}
(\Delta - &\frac{\p }{\p t})S\\
&=w_{\ijb k}w_{\bar i j\bar
k}+w_{\bar j i\bar k}w_{  j \bar ik}+w_{j\bar i} \Delta
w_{i\bar
j}+w_{i\bar j} \Delta w_{j\bar i}\\
&\quad -w_{j\bar i}\frac{\p}{\p t}w_{\ijb}- w_{i\bar
j}\frac{\p}{\p t}w_{j\bar i} -g^{i\bar r}g^{s\bar{l}}g^{k\bar j}w_{s\bar{r}}w_{\ijb}w_{k\bar l}-g^{i\bar l}g^{k \bar{r}}g^{s\jbar}w_{s\bar{r}}w_{\ijb}w_{k\bar l}\\
&=2|w_{i\bar jk}|^2+2w_{\ijb}\lf(  R_{l\bar k\ijb}w_{k\bar
l}-\frac12(w_{i\bar p}w_{p\bar j}+w_{p\bar j}w_{i\bar l})\ri)\\
&\hspace{12pt} -w_\ijb(\Omega_{i\bar p}w_{p\bar
j}+\Omega_{p\bar j}w_{i\bar l}+2F_{i\jbar})-g^{i\bar r}g^{s\bar{l}}g^{k\bar j}w_{s\bar{r}}w_{\ijb}w_{k\bar l}-g^{i\bar l}g^{k \bar{r}}g^{s\jbar}w_{s\bar{r}}w_{\ijb}w_{k\bar l}.
\end{split}
\end{equation}
Hence
\begin{equation}\label{S4}
\begin{split}
&(\Delta - \frac{\p }{\p
t})(1+S)^\frac12\\
&=\frac1{2(1+S)^\frac12}(\Delta - \frac{\p }{\p
t})S -\frac{|\nabla S|^2}{(1+S)^\frac32}\\
&\ge \frac1{2(1+S)^\frac12}\bigg[2w_{\ijb}\lf(  R_{l\bar k\ijb}w_{k\bar
l}-\frac12(w_{i\bar p}w_{p\bar j}+kw_{p\bar j}w_{i\bar l})\ri)\\
&\hspace{12pt}-w_\ijb(\Omega_{i\bar p}w_{p\bar
j}+\Omega_{p\bar j}w_{i\bar l}+2F_{i\jbar}+w_{i\jbar})\bigg]\\
&\hspace{12pt} -\frac1{2(1+S)^\frac12}\lf[g^{i\bar r}g^{s\bar{l}}g^{k\bar j}w_{s\bar{r}}w_{\ijb}w_{k\bar l}-g^{i\bar l}g^{k \bar{r}}g^{s\jbar}w_{s\bar{r}}w_{\ijb}w_{k\bar l}\ri]\\
&\ge - C_1(S+|\Rm|^2+|F_\ijb|^2).\\
\end{split}
\end{equation}
for some constant $C_1$ depending only on $n$ and $A_1-A$, where we have used   the fact that $|\nabla S|^2\le
2S\sum_{i,j,k}|w_{i\bar jk}|^2$.

In order to  estimate $|F_\ijb|$, note that in normal coordinate of $\sigma$ at a point,
\begin{equation}\label{curvature1}\begin{split}
     R_{i\bar j k\bar l}=g^{p\bar q}\frac{\p g_{p\bar j}}{\p z_l}\frac{\p g_{i\bar q}}{\p \bar z_k}+R^\sigma_{i\bar jk\bar l} -\frac{\p^2}{\p z_k\p \bar z_l} v_\ijb.
\end{split}
\end{equation}
Using Corollary \ref{eqivalent1}, Lemma \ref{3rdorder1-l1} we have
\begin{equation}\label{F-bound-e1}
   |\frac{\p^2}{\p z_k\p \bar z_l} v_\ijb|\le C_2(|\Rm|+1)
\end{equation}
for some constant $C_2$ depending only on the constants in the lemma. Now in a normal coordinate of $\sigma$ we have
\begin{equation*}
\begin{split}
   F_\ijb=&(g^{k\bar l}(\sigma_t)_{k\bar l})_{\ijb}+(f_t)_{\ijb}\\
   =&(g^{k\bar l})_\ijb(\sigma_t)_{k\bar l}+(g^{k\bar l})_i((\sigma_t)_{k\bar l})_{\bar j}+(g^{k\bar l})_{\bar j}((\sigma_t)_{k\bar l})_{i}+(g^{k\bar l})((\sigma_t)_{k\bar l})_\ijb+(f_t)_{\ijb}\\
   \end{split}
\end{equation*}
Hence by \eqref{F-bound-e1}, we have
\begin{equation}\label{F-bound-e2}
|F_\ijb|\le C_3(|\Rm|+1).
\end{equation}

On the other hand, by
 Corollary \ref{eqivalent1}, Lemma \ref{3rdorder1-l1}, \eqref{3rdorder2}, \eqref{curvature1} and \eqref{Q13} we have:
\begin{equation}\label{S5}
\begin{split}
(\Delta - \frac{\p }{\p t})Q\ge C(n) | \Rm|^2-C_4.
\end{split}
\end{equation}
 By \eqref{f-e5}, (\ref{S4}), (\ref{S5}), \eqref{F-bound-e2}, Lemma \ref{3rdorder1-l1}, Corollary \ref{eqivalent1} and Corollary \ref{gradientf}, we can find positive constants $C_5$, $C_6$, $C_7$ depending only on the quantities in the lemma such that in $M\times[0,T]$
\begin{equation}\label{S6}
\begin{split}
( \Delta - \frac{\p }{\p t})&\lf[(1+S)^\frac12 +C_5|  \nabla
w|^2+C_6Q\ri]\\
&\ge \lf[(1+S)^\frac12 +C_5|  \nabla w|^2+C_6Q\ri]
-C_7
\end{split}
\end{equation}
where we have used \eqref{f-e5}. By assumption, for each $t\in [0, T)$, $S$ is bounded. One may then proceed as in the proof of Lemma \ref{Deltabound1} to conclude that $$
S\le C
$$
for some constant $C$ depending only on the constants mentioned in the lemma.
\end{proof}

From the above Lemmas we may conclude
\begin{cor}\label{holdernormestimate}
 Let $v$ be as in Lemma \ref{S-e6}.   There is a constant $C$ depending only on the quantities in Lemma \ref{S-e6}  such that

 $$\|v\|_{2+\alpha, 1+\alpha/2, M\times[0, T)}\leq C$$.
 \end{cor}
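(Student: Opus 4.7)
My plan is to work in the quasi-coordinate charts $\xi_p\colon D(r)\to U_p$ and apply parabolic interior Schauder theory to the linearization of \eqref{MA1}, making crucial use of the $L^\infty$ bounds already established in this section. First I collect the uniform estimates on $M\times[0,T)$: bounds on $|v|, |v_t|, |v_{i\bar j}|_\sigma$ from Lemma \ref{Deltabound1} and Corollary \ref{eqivalent1}; on $|v_{;i\bar j k}|_\sigma$ from Lemma \ref{3rdorder1-l1}; on $|\nabla_\sigma w|_\sigma=|\nabla_\sigma v_t|_\sigma$ from Corollary \ref{gradientf}; and on $|w_{i\bar j}|_\sigma=|(v_t)_{i\bar j}|_\sigma$ from Lemma \ref{S-e6}. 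Since $v(\cdot,0)=0$, integrating the bound on $|\nabla_\sigma v_t|$ in $t$ gives a uniform bound on $|\nabla_\sigma v|$ on $M\times[0,T)$ that grows at most linearly in $T$.

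Pulling back to $D(r)$ via $\xi_p$, bounded geometry of order $2+\alpha$ together with the hypotheses on $\sigma$ and $f$ yields uniform bounds on the pullbacks of $\sigma, g_0, f$ and their relevant derivatives, independent of $p$. The bounds on $v_{;i\bar j k}$ and $(v_{i\bar j})_t$ then show that $g_{i\bar j}=\sigma_{i\bar j}+v_{i\bar j}$ is uniformly Lipschitz in the parabolic sense on $D(r)\times[0,T]$, and hence uniformly in $C^{\alpha,\alpha/2}$; the same holds for $g^{i\bar j}$ with uniform ellipticity from Corollary \ref{eqivalent1}.

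Next I differentiate the pulled-back equation \eqref{MA1} with respect to $z^l$, exactly as in the proof of Proposition \ref{MA-shorttime-l1}, to obtain the linear parabolic equation
$$\frac{\p v_l}{\p t}=g^{i\bar j}(v_l)_{i\bar j}+g^{i\bar j}\sigma_{i\bar j,l}-(g_0)^{i\bar j}(g_0)_{i\bar j,l}-f_l$$
for $v_l=\partial v/\partial z^l$ on $D(r)\times[0,T]$, with $v_l(\cdot,0)=0$. Both the coefficients $g^{i\bar j}$ and the inhomogeneous term are uniformly bounded in $C^{\alpha,\alpha/2}$, and $v_l$ itself is uniformly bounded in $L^\infty$ by the gradient bound above. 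Standard parabolic interior Schauder estimates then yield $\|v_l\|_{C^{2+\alpha,1+\alpha/2}(D(r/2)\times[0,T])}\le C$ independently of $p$, and the same argument applied in the conjugate direction gives the analogous bound for $v_{\bar l}$.

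Combining these Schauder bounds, all first real space derivatives of $v$ are uniformly in $C^{2+\alpha,1+\alpha/2}$ on $D(r/2)$; in particular all second real space derivatives of $v$ are uniformly in $C^{1+\alpha,(1+\alpha)/2}$ there, while $v_t$ is controlled in $C^{\alpha,\alpha/2}$ directly via \eqref{MA1} once $g_{i\bar j}\in C^{\alpha,\alpha/2}$. After possibly shrinking $r$ in Definition \ref{boundedgeom} so that the images of the $D(r/2)$'s still cover $M$, this yields the desired bound $\|v\|_{2+\alpha,1+\alpha/2,M\times[0,T)}\le C$ with $C$ depending only on the constants $A_1,\ldots,A_{14}$ from Lemma \ref{S-e6} (and on $T$). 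The only nonroutine step is the $L^\infty$ bound on $|\nabla v|$, and here this comes essentially for free from Corollary \ref{gradientf} and the zero initial condition, so no separate Bochner-type argument is needed.
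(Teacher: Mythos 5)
Your argument is correct and is essentially the argument the paper intends: the paper states this corollary with no proof ("From the above Lemmas we may conclude"), and the way you fill it in --- pulling back to the quasi-coordinates of Definition \ref{boundedgeom}, getting $g^{i\bar j}\in C^{\alpha,\alpha/2}$ from the bounds on $v_{i\bar j}$, $v_{;i\bar j k}$ and $S$, and applying parabolic Schauder to the $z^l$-differentiated equation --- is exactly the bootstrapping mechanism the paper itself uses in the proof of Proposition \ref{MA-shorttime-l1} and Remark \ref{remfourthorder}. Your observation that the $L^\infty$ bound on $\nabla v$ (needed for the Schauder step, and not controlled by the purely $(1,1)$ estimates) comes from integrating Corollary \ref{gradientf} against the zero initial condition is a worthwhile detail the paper leaves implicit.
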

\begin{rem}\label{s3r1}  Let $\Omega$ be a bounded domain. Suppose the quantities that we want to estimates in this section are bounded in $\p\Omega\times[0,T)$.  Then it is easy to see that from the above proofs we may conclude that the quantities are also bounded in $\Omega\times[0,T)$.

\end{rem}

 \section{Applications }\vskip .2cm

We will now apply the results of the previous sections.  Given a solution $v(x, t)$ to \eqref{MA1} as in the previous section, we are interested in establishing conditions under which we have longtime existence of $v(x, t)$ as a solution to  \eqref{MA1}.  More generally we are also interested in where singularities can form when  \eqref{MA1} does not admit a longtime solution.  In Theorem \ref{singularities} below we describe where singularities of \eqref{MA1} can occur in the case where our solution corresponds to a solution to the \KR flow.  As a corollary we will establish a longtime existence result for the \KR flow in Corollary \ref{longtimeexistence} which improves the longtime existence result in \cite{ChauTamYu08}.  Then in Theorem \ref{mainthm}, combining the $C^0$ estimate in \cite{ChauTam09} with the a priori estimates of the previous section we establish a longtime convergent solution to \eqref{MA1} under certain conditions which generalize those in the main result in  \cite{ChauTam09}.

  The following Theorem describes where singularities can or cannot form under the \KR flow in terms of the existence of plurisubharmonic functions defined on subsets of a complete non-compact \K manifold.

\begin{thm}\label{singularities}
Let $(M, g_0)$ be a complete non-compat \K manifold such that
\begin{enumerate}
\item [i)] $|Rm_0(x)| \to 0$ as $d(x)\to \infty$ where $d(x)$ is the distance function on $M$ from some $p\in M$.
\item [ii)] The injectivity radius of $(M, g_0)$ is uniformly bounded below by some $c>0$.
\item[iii)] There exists an open set $S$ with smooth compact boundary $\partial{S}$ and a smooth function $F$ which is strictly pluri-subharmonic on $S$ and smooth up to $\partial S$.
 \end{enumerate}
 Let $g(t)$ be a solution to the \KR flow $g'=-Rc$ on $M\times[0, T)$ with initial condition $g(0)=g_0$.   Then for any closed set $N$ contained in $S$, the Riemannian curvature tensor of $g(t)$ and all its covariant derivatives are bounded on $N\times[0, T)$ provided they are bounded on $\partial S\times[0, T)$.
 \end{thm}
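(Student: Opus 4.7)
The plan is to realize $g(t)$ as a solution of the parabolic Monge--Amp\`ere equation \eqref{MA1} with $\sigma(t)=\omega_0-t\operatorname{Ric}_0$ and $f=0$, so that $g(t)=\sigma(t)+i\partial\bar\partial v$ for the scalar potential $v(x,t)=\int_0^t\log(\det g(s)/\det g_0)\,ds$, and then to run the maximum--principle arguments of Section 3 on the space--time cylinder $\bar S\times[0,T)$, using the strictly plurisubharmonic function $F$ as a replacement for the distance--like barrier $\phi$ furnished by Lemma \ref{2+a-l1}. Condition (i) guarantees that $\sigma(t)$ is uniformly equivalent to $\omega_0$ outside a fixed compact set; by restricting to $[0,T-\delta]$ and letting $\delta\to 0$ at the end, we may assume $\sigma(t)$ is uniformly equivalent to $g_0$ throughout $M\times[0,T)$.

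The hypothesized bounds on $\operatorname{Rm}(g(t))$ and its covariant derivatives on $\partial S\times[0,T)$ translate, via $g(t)=\sigma(t)+i\partial\bar\partial v$ and the standard formula for $R_{i\bar j k\bar l}$ in terms of $g_{i\bar j}$ and its derivatives, into uniform bounds on $v$, $v_t$, $\Delta_\sigma v$, $Q$, $S$ and the other quantities controlled in Section 3 on the parabolic lateral boundary $\partial S\times[0,T)$. I would then execute each maximum--principle argument of Lemmas \ref{Deltabound1}, \ref{3rdorder1-l1} and \ref{S-e6} on $\bar S\times[0,T)$, substituting $F$ for the global function $\phi$ in the perturbation $h=e^{\alpha t}\phi$: the supremum of the perturbed quantity $H-\varepsilon h$ is then attained either at $t=0$ (where $v\equiv 0$), on $\partial S\times[0,T)$ (controlled by the boundary hypothesis), or at an interior space--time point (where the algebraic manipulations in Section 3 yield the estimate directly), and letting $\varepsilon\to 0$ gives the bound on $\bar S\times[0,T)$, hence on $N\times[0,T)$. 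Once $g(t)$ is shown to be uniformly equivalent to $g_0$ with bounded $|\operatorname{Rm}(g(t))|$ on $N\times[0,T)$, bounds on all covariant derivatives of $\operatorname{Rm}$ follow either from iterating the Schauder bootstrap applied to successive differentiations of \eqref{MA1} as at the end of the proof of Proposition \ref{MA-shorttime-l1}, or from Shi's derivative estimates, which apply by (i)--(ii).

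The technical heart of the argument---and its principal obstacle---is the barrier step when $\bar S$ is non-compact: strict plurisubharmonicity of $F$ alone does not force $F\to+\infty$ at infinity in $S$, so one cannot simply copy the $\phi$-based argument verbatim. I would handle this by using $F$ to build a proper substitute for $\phi$ on $S$, for instance via an $e^{AF}$--type construction that exploits the strict positivity $i\partial\bar\partial F\ge c\,\omega_0$ on compact subsets of $S$ so that the perturbation term grows along any sequence escaping every compact subset of $\bar S$; this is essentially what assumption (iii) purchases. The device is in the spirit of the localization arguments of \cite{LZ} and is where condition (iii) enters decisively.
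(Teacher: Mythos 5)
Your overall setup (realizing $g(t)$ as $\sigma(t)+\ii\ddbar v$ with $\sigma(t)=\omega_0-t\Ric_0$ and running the Section~3 estimates with boundary data on $\p S$) matches the paper, but there are two genuine gaps, and both concern exactly the points where hypotheses (i)--(iii) do real work. First, you never use (i) and (ii) for what they are actually for: via the pseudolocality theorem (Theorem 9.1 of \cite{ChauTamYu08}), curvature decay plus the injectivity radius lower bound give uniform bounds on $|\nabla^k\Rm(g(t))|$ on the complement of a \emph{fixed bounded} set $\Omega_1\supset\p S$ for all $t\in[0,T)$. This reduces the entire problem to the bounded domain $\Omega\cap S$, whose parabolic boundary is controlled. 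Your alternative --- running the maximum principle on all of $\bar S\times[0,T)$ with a barrier manufactured from $F$ --- does not close: a function that is strictly plurisubharmonic on $S$ and smooth up to $\p S$ need not be proper, bounded below, or have $\ii\ddbar F$ uniformly bounded below near infinity in $S$, so no $e^{AF}$-type construction forces the perturbation to blow up along sequences escaping to infinity. You correctly flag this as the obstacle, but the proposed repair is not available; the escape route is pseudolocality, not a barrier.

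Second, your reduction ``restrict to $[0,T-\delta]$ so that $\sigma(t)$ is uniformly equivalent to $g_0$'' is false as stated and cannot be repaired by letting $\delta\to0$: $\omega_0-t\Ric_0$ can fail to be positive at interior points for arbitrarily small $t>0$ (wherever $\Ric_0$ has a large positive eigenvalue), and even where it only degenerates as $t\to T$ the constants $A_5$ entering Lemmas \ref{Deltabound1}--\ref{S-e6} would blow up with $\delta$. This is precisely the second, decisive use of $F$ in the paper: on the bounded set $\Omega\cap S$ one rescales $F$ so that $\sigma(T)+\ii\ddbar F\ge c\,\omega_0$ and replaces the data by $\hat\sigma=\frac{T-t}{T}\sigma(0)+\frac{t}{T}(\sigma(T)+\ii\ddbar F)$, $\hat v=v-\frac{t}{T}F$, which satisfies \eqref{MAamodified} with a background uniformly equivalent to $\omega_0$ up to time $T$. (The function $F$ also enters the lower bound for $v_t$ through the quantity $(2T-t)v_t+v-F$.) Without the pseudolocality reduction and the $\hat\sigma$ modification, the a priori estimates of Section~3 cannot be applied uniformly up to $t=T$, and the argument does not yield the claimed bounds on $N\times[0,T)$.
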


 \begin{proof} By the results in \cite{Sh2}, we may assume that $g_0$ is smooth and
 that all the covariant derivatives of the curvature tensor
 of $\omega_0$ are bounded on $M$.  Now suppose $g(t)$ solves  the \KR flow $g'=-Rc$ on $M\times[0, T)$ with initial condition $g(0)=g_0$, and that the curvature of $g(t)$ all its covariant derivatives are bounded on $\partial S\times[0, T)$.

   By the discussion in the introduction, we know that there is a solution $v(x, t)$ to
  \begin{equation}\label{MAa}
\left\{%
\begin{array}{ll}
    \dfrac{\p v}{\p t}= \log \dfrac{(\sigma+ \ii\partial\bar{\partial}v)^n}{(\omega_0)^n}      \\
   v(x,0)=0. \\
\end{array}%
\right.
\end{equation}
on $M\times[0, T)$ such that $\sigma=-tRc_0 + \omega_0$ and $\omega=\sigma+ \sqrt{-1}\partial\bar{\partial}v$ where $\omega$ is the \K form for $g(t)$. By Theorem 9.1 in \cite{ChauTamYu08} we can find   sufficiently large bounded open sets $\Omega_1$, $\Omega$ with smooth boundary such that

\begin{itemize}
\item  [(i)]  $\partial S \subset \Omega_1\subset\subset \Omega$;
\item [(ii)] $|\nabla_t^k Rm(x, t)|\leq C_k$ on $\Omega^c_1\times [0, T)$ for all $k$ and some $C_k$ depending only on $\Omega$ and $k$.
\end{itemize}
  In particular, from our hypothesis and the definition of $\sigma$ it is not hard to show we thus have $|\nabla_0^k v|\leq C_k$ on $\partial (\Omega\bigcap S) \times [0, T)$ for all $k$ and some $C_k$ depending only on $k$ and $T$.   Now if $N$ is any closed subset of $S$, then we have $N=(\Omega_1\bigcap N)\bigcup (\Omega_1^c \bigcap N)$ where $\Omega_1\bigcap N$ has compact closure in $\Omega\bigcap S$.  Hence by condition (ii) above, we see  that to prove the theorem it will be sufficient to prove that  $|\nabla_0^k v|\leq C_k$ on $\Omega' \times [0, T)$ for all $k$ and some $C_k$ depending only on $g_0$, $k$, $T$ and $\Omega'$ where $\Omega'$ is any closed subset of $\Omega\bigcap S$.  We now proceed to do this.  We begin by showing

 \begin{bf} Claim 1: \end{bf}  There exists $C>0$ such that $\sup_{(\Omega\bigcap S) \times [0, T)}|v_t| \leq C$.
\vspace{12pt}

We will establish the claim by using the a priori estimates from the previous section applied to domains (see Remark \ref{s3r1}), and by using the plurisubharmonic function $F$ and arguing as in \cite{LZ} where the authors considered the normalized \KR flow $g'=-Rc-g$ on a complete non-compact \K manifold (also see \cite{TZ} and \cite{Ts}).

We begin by differentiating  \eqref{MAa}  with respect to $t$
successively to obtain the following which we express in an
orthonormal coordinate with respect to $g(t)$:
   \begin{equation}\label{MAat}
 \dfrac{\p v_{t}}{\p t}= \Delta v_{t} +(\sigma')_{i\bar{i}}
 \end{equation}

  \begin{equation}\label{MAatt}
  \begin{split}
 \dfrac{\p v_{tt}}{\p t}= &\Delta v_{tt} - (g'(t))_{i\bar{j}}(v_t)_{i\bar{j}}- (g'(t))_{i\bar{j}}(\sigma')_{i\bar{j}}\\
 =&\Delta v_{tt} - (g'(t))_{i\bar{j}}(g'(t))_{i\bar{j}}\\
 \leq&\Delta v_{tt}\\
\end{split}
 \end{equation}
 Now from our above observation and \eqref{MAa}, we know
 that $|v_{tt}|$ is  bounded on $\partial(\Omega\bigcap S)\times [0, T)$.  From this fact and \eqref{MAatt}, we conclude by the maximum principle that $v_{tt}$, and thus $v_t$ and $v$ is  bounded from above on $(\Omega\bigcap S)\times [0, T)$.  Now for the bound from below, we use the pluri-subharmonic function $F$ and compute  in an orthonormal coordinate with respect to $g(t)$:

   \begin{equation}\label{MAatbelow}
  \begin{split}
 \dfrac{\p}{\p t}((2T-t)v_t+v-F )= &\Delta ((2T-t)v_t+v-F )\\
 &-\Delta u +\Delta F + 2T(\sigma'_t)_{i\bar{i}}- t(\sigma'_t)_{i\bar{i}}\\
 =&\Delta ((2T-t)v_t+v-F ) - n+
 (2T\sigma)_{i\bar i}+F_{i\bar{i}}\\
 \geq&\Delta ((2T-t)v_t+v-F ) -n\\
\end{split}
 \end{equation}
 where in the second equality we have used the fact that by scaling $F$ we may assume that we have $2T(\sigma'_t)_{i\bar{i}}+F_{i\bar{i}}\geq0$ on $(\Omega\bigcap S)$, $\sigma'=-Rc_0$ and the fact that $n=g^{i\bar{j}}g_{i\bar{j}}=g^{i\bar{j}}(\sigma+\partial\bar{\partial}v)_{i\bar{j}}=\Delta v -t g^{i\bar{j}}(Rc_0)_{i\bar{j}}+g^{i\bar{j}}(\omega_0)_{i\bar{j}}$.  Arguing as above and using the bound on $v$ from above, we conclude from \eqref{MAatbelow} and the maximum principle that $v_t$ and thus $v$ is bounded from below on $(\Omega\bigcap S)\times [0, T)$.  This completes the proof of the claim.

We now modify \eqref{MAa} so that the varying background metric is uniformly \K. We note that
   \begin{equation}
 \sigma=\dfrac{T-t}{T}\sigma(0)+\dfrac{t}{T}\sigma(T)
  \end{equation}
and we let
   \begin{equation}
   \begin{split}
\hat{ \sigma}&=\dfrac{T-t}{T}\sigma(0)+
\dfrac{t}{T}(\sigma(T) +\partial\bar{\partial}F)\\
\hat{v}&=v-\dfrac{t}{T}F.
\end{split}
\end{equation}
By condition (iii), by scaling $F$ we may
assume  that $\sigma(T) +   \partial\bar{\partial}F$   is
equivalent to $\omega_0$  on $(\Omega\bigcap S)$. Hence $\hat \sigma$
is uniformly equivalent to $\omega_0$ on
$(\Omega\bigcap S)\times[0,T)$.

Then from the above equations and \eqref{MAa} we have
  \begin{equation}\label{MAamodified}
\left\{%
\begin{array}{ll}
    \dfrac{\p \hat{v}}{\p t}= \log \dfrac{(\hat{\sigma}+ \partial\bar{\partial}\hat{v})^n}{(\omega_0)^n}- \dfrac{F}{T}\      \\
   \hat{v}(x,0)=0. \\
\end{array}%
\right.
\end{equation}
The point is that the background $\hat{\sigma}$ is now a \K metric which is uniformly equivalent to $\omega_0$ on $(\Omega\bigcap S)\times [0, T)$.  On the other hand, our previous estimates imply that $|\hat{v}_t|$ and thus $|\hat{v}|$ are uniformly bounded on $(\Omega\bigcap S)\times [0, T)$.  From this fact, the above observation that $|\nabla_0^k v|$ and thus $|\nabla_0^k \hat{v}|$ is uniformly bounded on $\partial(\Omega\bigcap S)\times [0, T)$ for all $k$, and the estimates in the previous section we can conclude by the maximum principle that $\hat{v}_{i\jbar}, \hat{v}_{i\jbar k}, \hat{v}_{ti\jbar}$ are uniformly bounded on $(\Omega\bigcap S) \times [0, T)$.  Thus by differentiating \eqref{MAamodified} and applying parabolic Schauder estimates we conclude that $|\nabla_0^k \hat{v}|$ and thus $|\nabla_0^k v|$ is uniformly bounded on $\Omega'\times [0, T)$ for all $k=0, 1, 2,..$ by a constant depending only on $T$, $g_0$, $\Omega'$ and $k$ where $\Omega'$ is any closed set in $(\Omega\bigcap S)$.   This completes the proof of the Theorem.
\end{proof}

As a Corollary of Theorem \ref{singularities} we have the following longtime existence result for the \KR flow.

 \begin{cor}\label{longtimeexistence}
Let $(M, g_0)$ be a complete non-compat \K manifold such that
\begin{enumerate}
\item [i)] $|Rm_0(x)| \to 0$ as $d(x)\to \infty$ where $d(x)$ is the distance function on $M$ from some $p\in M$.
\item [ii)] The injectivity radius of $(M, g_0)$ is uniformly bounded below by some $c>0$.
\item[iii)] There exists a strictly pluri-subharmonic function $F$ on $M$.
 \end{enumerate}
 Then the \KR flow $g'=-Rc$ has a longtime solution $g(t)$ on $M$ with initial condition $g(0)=g_0$.
 \end{cor}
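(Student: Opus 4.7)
The plan is to proceed by contradiction: if the maximal existence time $T$ of the \KRF with initial data $g_0$ is finite, I will produce uniform bounds on the curvature of $g(t)$ and all its covariant derivatives on all of $M \times [0,T)$, which contradicts the maximality of $T$ via Shi's extension theorem. The argument naturally splits $M$ into an exterior region where conditions (i) and (ii) give direct control via \cite{ChauTamYu08}, and an interior compact region where condition (iii) allows us to invoke Theorem \ref{singularities}.

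First, condition (i) implies that $g_0$ has bounded curvature; after possibly smoothing $g_0$ via \cite{Sh2} one may further assume bounded geometry of every order, so that Shi's shorttime existence result produces a solution $g(t)$ of $g' = -\Ric$ with $g(0) = g_0$ on some maximal interval $[0,T)$. Assume, for contradiction, that $T < \infty$. Next I would invoke Theorem 9.1 of \cite{ChauTamYu08}, which under conditions (i) and (ii) yields a bounded open set $\Omega_1 \subset M$ and constants $C_k$ (independent of $t$) such that the curvature tensor of $g(t)$ and all its covariant derivatives satisfy $|\nabla^k \Rm(g(t))| \le C_k$ on $\Omega_1^c \times [0,T)$ for every $k \ge 0$.

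Now I would choose a bounded open set $S$ with smooth compact boundary $\partial S$ satisfying $\overline{\Omega_1} \subset S$ and $\partial S \subset \Omega_1^c$ (for instance, a smoothly truncated metric ball about a fixed basepoint of sufficiently large radius). The function $F$ from condition (iii), being globally defined and strictly plurisubharmonic on $M$, restricts to a strictly plurisubharmonic function on $S$ which is smooth up to $\partial S$. By the previous step, the curvature tensor and its covariant derivatives of $g(t)$ are uniformly bounded on $\partial S \times [0,T)$. Thus the hypotheses of Theorem \ref{singularities} are met; taking $N = \overline{\Omega_1}$, which is closed in $S$, the conclusion provides uniform bounds on the curvature and its covariant derivatives on $\overline{\Omega_1} \times [0,T)$. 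Combining with the exterior bounds yields the sought uniform bounds on all of $M \times [0,T)$, and Shi's extension theorem then extends $g(t)$ past $T$, contradicting maximality, so $T = \infty$.

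The main obstacle is coordinating the two halves of the argument: Theorem \ref{singularities} demands a priori curvature bounds on $\partial S \times [0,T)$ as input, and the only available way to supply these is to position $\partial S$ far enough into the asymptotic region that \cite{ChauTamYu08}'s estimates apply uniformly up to the terminal time. This is exactly what condition (i), curvature vanishing at infinity together with the injectivity radius bound (ii), makes possible. One minor technical point to verify along the way is that the smoothing of $g_0$ in \cite{Sh2} preserves conditions (i)--(iii), which is standard.
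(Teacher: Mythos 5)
Your argument is correct and is precisely the deduction the paper intends (the paper leaves the corollary's proof implicit as an immediate consequence of Theorem \ref{singularities}): use pseudolocality from \cite{ChauTamYu08} to control the curvature outside a compact set up to the maximal time, position $\partial S$ in that exterior region, restrict the global strictly plurisubharmonic $F$ to $S$, and apply Theorem \ref{singularities} to the remaining compact piece before extending via Shi. No gaps.
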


 We now turn to our second application of the a priori estimates from our previous section.  We will establish the following convergence result for \eqref{MA1} which generalizes the main convergence result in \cite{ChauTam09}.
 \begin{thm}\label{mainthm} Let $(M^n,g_0)$ be a smooth complete non-compact
\K manifold with $n\ge3$, and let $\Omega$ be a (1,1)-form on $M$ such that $\Ric_0-\Omega=\ii\p\bar\p f_0$  for some smooth potential  $f_0$.  Suppose we have $\sum_{k=0}^1|\nabla_0^kRm|+\sum_{k=0}^3|\nabla_0^k f| < \infty.$  Then we have the following:
\begin{enumerate}
\item [1.] The following modified \KR flow has a long time smooth solution $g(t)$.

\begin{equation}\label{MKRF}
    \left\{%
\begin{array}{ll}
    \dfrac{\p   {g}_{i\jbar}}{\p t} =- R_{i\jbar} + \Omega_{i\jbar}
      \\
     g_\ijb(x, 0) =(g_0)_\ijb \\
\end{array}%
\right.
\end{equation}

\item [2.]  If in addition the potential  $f_0$ satisfies

\begin{enumerate}
       \item [(a)]         \begin{equation}\label{decaycondition}
        |f_0|(x)\le \frac{C_1}{1+\rho_0^{2+\e}(x)}
\end{equation}
   for some $C_1, \e >0$,  and all $x\in M$ where
    $\rho_0(x)$ is the distance function from a fixed $o\in M$.
        \item [(b)] The following Sobolev inequality is true:
    \begin{equation}\label{Sobolev}
\lf(\int_M|\phi|^{\frac{2n}{n-1}}dV_0\ri)^{\frac{n-1}{n}}\le
C_2\int_M|\nabla_0 \phi|^2dV_0
\end{equation}
 for some $C_2>0$ and all $\phi\in C_0^\infty(M)$.
\item[(c)] There exists a constant $C_3>0$ such that
\begin{equation}\label{volumegrowth}
    V_0(r)\le C_3r^{2n}
\end{equation}
for some $C_3 >0$ and all $r$ where $V_0(r)$ is the volume of the geodesic
ball with radius $r$  centered at some $o\in M$.
\end{enumerate}
Then as $t\to\infty$, $g(t)$ converges
uniformly on compact sets in the $C^\infty$ topology on $M$ to a complete
 \KR metric $g_{\infty}$ on $M$ which is uniformly equivalent to $g_0$, has bounded geometry of order $2+\alpha$ and  has Ricci form equal to $\Omega$.
\end{enumerate}

\end{thm}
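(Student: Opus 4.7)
\textbf{Proof plan for Theorem \ref{mainthm}.} The key identification is that \eqref{MKRF} is exactly \eqref{MA} with $\sigma(t)\equiv\omega_0$ and $f\equiv f_0$: for any solution $v$ of the latter, setting $\omega(t):=\omega_0+\ii\ddbar v$ and using $\Ric_0-\Omega=\ii\ddbar f_0$ gives $\p_t\omega=\ii\ddbar v_t=-\Ric(\omega)+\Omega$, which is \eqref{MKRF}. So it suffices to construct a long-time solution of this specialized \eqref{MA} and to analyze its $t\to\infty$ behavior. For part (1), the hypothesis $\sum_{k=0}^1|\nabla_0^k\Rm|<\infty$ combined with Shi's derivative estimates guarantees that $g_0$ has bounded geometry of order $2+\alpha$ for every $\alpha\in(0,1)$, and Proposition \ref{MA-shorttime-l1} (applied with $\sigma=\omega_0$, $u=0$) produces a smooth solution $v$ on some maximal $M\times[0,T^*)$.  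To rule out $T^*<\infty$ I would differentiate \eqref{MA} in $t$ and use $\sigma_t=0$ to get $v_{tt}=\Delta_{g(t)}v_t$; a barrier version of the maximum principle as in Lemma \ref{shorttime-l1} then gives $\sup_M|v_t(\cdot,t)|\le\sup_M|f_0|$ and hence $|v|\le T^*\sup_M|f_0|$ on $M\times[0,T^*)$.  With $\sigma_t=0$ and the bounds on $f_0$ coming from $\sum_{k=0}^3|\nabla_0^k f_0|<\infty$, every hypothesis of Lemmas \ref{Deltabound1}, \ref{3rdorder1-l1} and \ref{S-e6} is satisfied, and they produce uniform bounds on $v_{i\bar j}$, $v_{i\bar j k}$ and $v_{ti\bar j}$ on $M\times[0,T^*)$.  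Therefore $g(T^*)$ still has bounded geometry of order $2+\alpha$, and a second application of Proposition \ref{MA-shorttime-l1} at $t=T^*$ extends the solution past $T^*$, contradicting maximality.

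For part (2), the additional hypotheses (a)-(c) are precisely the assumptions under which the $C^0$ estimate of \cite{ChauTam09} yields a uniform bound $|v(x,t)|\le C$ on $M\times[0,\infty)$.  Once this is granted, Lemmas \ref{Deltabound1}-\ref{S-e6} upgrade to uniform-in-$t$ bounds on $g(t)$ of order $2+\alpha$, so $\{g(t)\}_{t\ge 0}$ is precompact in $C^\infty_{loc}(M)$ and uniformly equivalent to $g_0$.  To identify the limit I would use $v_{tt}=\Delta_{g(t)}v_t$ together with the Sobolev inequality and volume-growth bound to derive an $L^2$-in-time decay estimate on a Dirichlet-type energy of $v_t$; combined with uniform parabolic Schauder bounds this promotes to $v_t\to 0$ in $C^\infty_{loc}$ and to convergence $v(\cdot,t)\to v_\infty$ in $C^\infty_{loc}$ for some smooth bounded $v_\infty$.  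Sending $v_t\to 0$ in \eqref{MA} gives $\log(\omega_\infty^n/\omega_0^n)=f_0$ with $\omega_\infty:=\omega_0+\ii\ddbar v_\infty$, which is equivalent to $\Ric(g_\infty)=\Ric_0-\ii\ddbar f_0=\Omega$, and the asserted uniform equivalence of $g_\infty$ to $g_0$ together with its bounded geometry of order $2+\alpha$ follow from the uniform estimates above.

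The main obstacle is part (2).  Because $g(t)$ does \emph{not} evolve under the standard \KRF, the usual \KR-theoretic tools (\cite{Sh1,Sh2,Ca}) do not apply directly, and both the $C^0$ estimate and the decay $v_t\to 0$ must be rederived in the presence of the $\ii\ddbar f_0$ perturbation.  This is exactly the regime where the a priori estimates of \S 3, and in particular Lemma \ref{S-e6} which controls the second space derivatives of $v_t$, become indispensable; verifying that the $C^0$ argument of \cite{ChauTam09} carries through in this modified setting, and bootstrapping the resulting compactness to genuine convergence of $v$, is the delicate step.
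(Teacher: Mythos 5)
Your outline of part (1) and the first half of part (2) matches the paper's proof: the same identification of \eqref{MKRF} with \eqref{MA2}, the same use of $v_{tt}=\Delta_{g(t)}v_t$ and the maximum principle to bound $|v_t|$ by $\sup_M|f_0|$ on any finite interval, the same appeal to the a priori estimates of \S 3 to extend past a finite maximal time, and the same adaptation of the $C^0$ estimate of \cite{ChauTam09} under hypotheses (a)--(c). The route to $v_t\to 0$ is also the paper's: the differential inequality $\frac{d}{dt}\int_M w^p\,dV_t\le -C\int_M|\nabla w^k|^2\,dV_t$ (which is where the Sobolev and volume-growth hypotheses enter, via \cite{ChauTam09}) combined with the uniform space-time derivative bounds.

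There is, however, a genuine gap at the very last step. You assert that $v_t\to 0$ in $C^\infty_{loc}$ ``combined with uniform parabolic Schauder bounds promotes to convergence $v(\cdot,t)\to v_\infty$ in $C^\infty_{loc}$.'' This does not follow: decay of $v_t$ without an integrable-in-time rate only yields, via the uniform higher-order bounds, \emph{subsequential} convergence $v(\cdot,t_i)\to u_1$, and a priori different sequences $t_i,s_i\to\infty$ could produce different limits (think of $v(t)=\sin(\log t)$, for which $v_t\to 0$ but $v$ does not converge). The paper closes exactly this gap with a separate argument: if $u_1=\lim u(t_i)$ and $u_2=\lim u(s_i)$, then writing the difference of the Monge--Amp\`ere operators as $\bigl(\int_0^1 G^{l\bar k}(s)\,ds\bigr)(u(t_i)-u(s_i))_{l\bar k}$ and passing to the limit using $v_t\to 0$ shows $\Delta_h(u_1-u_2)=0$ for a limit metric $h$ uniformly equivalent to $g_0$; since the $C^0$ argument of \cite{ChauTam09} also gives $\int_M|u(t)|^p\,dV_0\le C$ uniformly in $t$, the difference $u_1-u_2$ is an $L^p$ harmonic function, and Yau's Liouville theorem \cite{Y1} forces $u_1=u_2$. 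Your proposal is missing this uniqueness-of-limits mechanism (or some substitute for it, such as a quantitative decay rate for $v_t$), and without it the claimed convergence of $g(t)$ as $t\to\infty$, rather than merely along subsequences, is not established.
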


\begin{rem} If we assume further that  all the covariant derivatives of $Rm_0$ and $f_0$ are bounded, then $g_{\infty}$ will also have all covariant derivatives of curvature bounded.  \end{rem}

As in \cite{ChauTam09} our approach is to consider \eqref{MA1} where we set $\sigma=g_0$.  In other words, we consider the equation
                    \begin{equation}\label{MA2}
\left\{%
\begin{array}{ll}
    \dfrac{\p v}{\p t}= \log \dfrac{\det ((g_0)_{k \bar{l}}+ v_{k
\bar{l}})}{\det ((g_0)_{k \bar{l}})}-f_0
      \\
    v(x,0)=0 \\
\end{array}%
\right.
\end{equation}
A straight forward calculation show that if $v(x, t)$ solves \eqref{MA2} on $M\times[0, T)$ then $g_\ijb(t)=(g_0)_{\ijb}+v_\ijb$ is a family of \K metrics on $M$ which solves \eqref{MKRF} on $M\times[0, T)$.  On the other hand, it is not hard to show that given a solution $g_\ijb(t)$ to \eqref{MKRF} on $M\times[0, T)$, then we obtain a solution $v(x, t)$ to \eqref{MA2} on $M\times[0, T)$ (see  \cite{ChauTam09}).

We now prove the first part of the Theorem. First note that the assumptions on the curvature tensor imply $(M,g_0)$ has bounded geometry of order $2+\alpha$ for some $0<\alpha<1$ by \cite{TY3}. The assumptions on $f$ then imply that $f\in C^{2+\alpha}(M)$. Hence under the hypothesis in the theorem, by Proposition 2.1 there exists a maximal smooth solution $v$ to \eqref{MA2} satisfying the conclusions of the Proposition on $M\times[0, T)$ for some $T>0$.  On the other hand, differentiating \eqref{MA2} with respect to $t$ gives

   \begin{equation}
   \left\{%
\begin{array}{ll}
    \dfrac{\p v_t}{\p t}= \Delta_t v_t
      \\
    v_t(x,0)=f_0 \\
\end{array}%
\right.
\end{equation}
 and thus by the maximum principle and our hypothesis on $f_0$, we conclude that $|v_t|$ and thus $|v|$ is uniformly bounded on $M\times[0,T)$.  Now by Remark 1, we may have that the conclusion of Corollary 3.3 is true on $M\times[0,T)$ for some constant $C$.  From this, Remark 1 and Proposition 2.1 we see that if $T<\infty$ we could then extend $v$ as a solution to \eqref{MA2} to $M\times[0, T')$ for some $T'>T$ which contradicts the maximality of $T$.  Thus we must have $T=\infty$ which establishes the first part of the Theorem.

We now prove the second part of the Theorem on convergence.  In \cite{ChauTam09} a $C^0$ estimate was established for \eqref{MA2} under the conditions of Theorem \ref{mainthm} and the additional assumption that $\Omega=0$.  The key difference in this case is that  the corresponding metrics $g(t)$ will evolve under the standard \KR flow, and general \KR theory may then be applied.  On the other hand, the a priori estimates of the previous section basically ensure that by the same proof as in \cite{ChauTam09} we obtain the same $C^0$ estimate without this additional assumption.   Our a priori estimates and Proposition \ref{MA-shorttime-l1} then imply the existence of a longtime solution to  \eqref{MA2} which stays uniformly bounded in the $C^0$ norm on $M$ with additional higher order derivative bounds aswell.  We state this more precisely in the following

\begin{lem}\label{regularity-l1}
Let $(M^n,g_0)$ be as in Theorem \ref{mainthm}.  Then \eqref{MA2} has a smooth solution  $v$ on $M\times[0, \infty)$ such that for each $0\leq l \leq 4$, $\| \nabla_0^l v (x, t)\|_{g_0}$ is bounded by a constant depending  only on $l$, $g_0$, $f_0$.

  Moreover, given any $4< l <\infty$ and a compact set $S\subset M$ then
  $\| \nabla_0^l v (x, t)\|_{S, g_0}$ is bounded by a constant depending only on $S$, $l$, $g_0$, $f_0$.

\end{lem}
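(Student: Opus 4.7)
\textbf{Proof proposal for Lemma \ref{regularity-l1}.}
The plan has three stages: first extract the longtime solution together with a $t$-independent $C^0$ bound on $v$, then apply the a priori estimates of Section 3 to promote this to uniform control of $\|\nabla_0^l v\|_{g_0}$ for $l\le 4$, and finally bootstrap locally via differentiation and parabolic Schauder for higher $l$.

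By part 1 of Theorem \ref{mainthm}, \eqref{MA2} already has a smooth longtime solution $v$ on $M\times[0,\infty)$, and differentiating \eqref{MA2} in $t$ gives the heat-type equation $\partial_t v_t=\Delta_t v_t$ with $v_t(\cdot,0)=-f_0$, so the maximum principle yields $|v_t|\le \sup_M|f_0|$ uniformly in $(x,t)$. The main obstacle is a $C^0$ bound on $v$ itself that is independent of $t$. This will be obtained by running the Moser-type iteration of \cite{ChauTam09} essentially unchanged: the key inputs there were the Sobolev inequality \eqref{Sobolev}, the polynomial volume growth \eqref{volumegrowth}, the decay \eqref{decaycondition} of the potential, and uniform equivalence of the evolving metric to $g_0$, all of which are available here. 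The fact that the metrics $g(t)$ now evolve under $g'=-\Ric+\Omega$ rather than the standard \KRF is immaterial, since the evolution inequalities for $|v|^p$ used in \cite{ChauTam09} depend only on structural estimates for $g(t)$ and on the source $f_0$, not on the particular flow. Thus $\sup_{M\times[0,\infty)}|v|\le C(g_0,f_0)$.

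With uniform bounds on $v$ and $v_t$, we apply Section 3 with $\sigma\equiv\omega_0$ (so $\sigma_t=0$ and $|\nabla_0\sigma|_\sigma=0$) and source $f=f_0$. The hypotheses of Theorem \ref{mainthm} bound $|\nabla_0^k\Rm_0|$ for $k\le 1$ and $|\nabla_0^k f_0|$ for $k\le 3$, and these make all the constants $A_1,\dots,A_{14}$ of Lemmas \ref{Deltabound1}--\ref{S-e6} finite. Lemma \ref{Deltabound1} and Corollary \ref{eqivalent1} then give $C^{-1}g_0\le g(t)\le C g_0$ uniformly; Lemma \ref{3rdorder1-l1} gives $|v_{;i\bar jk}|\le C$; Corollary \ref{gradientf} gives $|\nabla w|\le C$; and Lemma \ref{S-e6} gives $|w_{i\bar j}|\le C$, all uniform on $M\times[0,\infty)$. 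These combine via Corollary \ref{holdernormestimate} into a uniform $C^{2+\alpha,1+\alpha/2}(M\times[0,\infty))$ bound on $v$, and together with the bound on $w_{i\bar j}=v_{ti\bar j}$ and the equation \eqref{MA2} they yield the claimed bound on $\|\nabla_0^l v\|_{g_0}$ for each $0\le l\le 4$.

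For $l>4$ on a compact set $S\subset M$, fix a finite cover of a neighbourhood of $S$ by quasi-coordinate charts as in Definition \ref{boundedgeom} and argue as in the bootstrap at the end of the proof of Proposition \ref{MA-shorttime-l1} (see also Remark \ref{remfourthorder}). Differentiating \eqref{MA2} in the chart coordinates produces a strictly parabolic equation for each spatial derivative whose coefficients and right-hand side have $C^{\alpha,\alpha/2}$ norms already bounded from the previous step. Finitely many applications of interior parabolic Schauder estimates then give $\|\nabla_0^l v\|_{S,g_0}\le C(S,l,g_0,f_0)$, completing the proof.
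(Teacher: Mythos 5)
Your proposal is correct and follows essentially the same route as the paper: a $t$-independent $C^0$ bound on $v$ via the Moser-type argument of Lemma 4 in \cite{ChauTam09}, then Corollary \ref{holdernormestimate} and the quasi-coordinate parabolic Schauder bootstrap from the proof of Proposition \ref{MA-shorttime-l1} and Remark \ref{remfourthorder} for the derivative bounds, iterated locally for $l>4$. The only cosmetic difference is that you invoke part 1 of Theorem \ref{mainthm} for longtime existence, whereas the paper re-derives it inside the lemma by combining the uniform $C^{4+\alpha}$ bounds with the extension mechanism of Proposition \ref{MA-shorttime-l1}.
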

\begin{proof}
By Proposition \ref{MA-shorttime-l1}, there exists a solution $v(x, t)$ to  \eqref{MA2} on $M\times[0, T)$ for some $T>0$ satisfying the properties in the Proposition.  Thus $g_\ijb(t)=(g_0)_{\ijb}+v_\ijb$ is a family of \K metrics on $M$ such that for each $t\in [0, T)$, $g(t)$ is equivalent to $g_0$.  Moreover, by remark \ref{remfourthorder} we in fact have that for each $t\in [0, T)$, $g(t)$ has bounded curvature.  It follows from the proof of Lemma 4 in \cite{ChauTam09} that we have $\sup_{t\in[0, T)} |v(t)|\leq C$ for some $C$ independent of $T$.

 In particular, by Corollary \ref{holdernormestimate} we have $\|v\|_{C^{2+\alpha, 1+\alpha/2}(M\times[0, T))}$ bounded independent of $T$.  Thus by considering the pull back of \eqref{MA1} in an arbitrary quasi-coordinate and applying a bootstrapping argument, as in the last part of the proof of Proposition  \ref{MA-shorttime-l1}, we conclude that $\|v(t)\|_{C^{4+\alpha}(M)}$ is bounded independent of $t\in[0, T)$ (see Remark \ref{remfourthorder}).  Thus by Proposition \ref{MA-shorttime-l1} we can extend $v(x, t)$ as a solution to \eqref{MA2} past $T$,  and we may then assume that  $T=\infty$ in the above discussion.  This completes the proof of the first part of the Lemma.  The proof of the second part of the Lemma follows from iterating the above bootstrapping argument.

 \end{proof}

Let $v(x, t)$ is a longtime solution to \eqref{MA2} as in Lemma \ref{regularity-l1}.  We want investigate the longtime behavior of $w(x, t)=u_t(x, t)$.



 \begin{lem}\label{f-convergence-l1}
$w\to 0$ pointwise on $M$ as $t\to\infty$.
 \end{lem}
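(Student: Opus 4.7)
\smallskip
\noindent\textbf{Proof plan.} The plan is to derive a linear parabolic equation for $w$, verify that $w^p$ is a subsolution of the heat equation on $(M, g(t))$ for a well-chosen even integer $p$, bound $\int_M w^p\,dV_t$ uniformly in $t$ via an energy identity, and apply parabolic Moser iteration to convert this into a pointwise decay estimate of rate $t^{-n/p}$.

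Differentiating \eqref{MA2} with respect to $t$ gives the linear parabolic equation
$$w_t \;=\; \Delta_{g(t)} w, \qquad w(\cdot, 0) \;=\; -f_0,$$
on $M\times[0,\infty)$, where $g(t) = g_0 + \ii\ddbar v$. The maximum principle then yields $|w| \le \|f_0\|_{L^\infty}$ throughout. The decay assumption $|f_0|(x)\le C(1+\rho_0(x)^{2+\e})^{-1}$ together with the volume growth \eqref{volumegrowth} ensures $f_0 \in L^p(M, dV_0)$ for every $p > 2n/(2+\e)$. I fix an even integer $p$ with $p > 2n/(2+\e)$ and $p > 1 + \|f_0\|_{L^\infty}$. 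For such $p$ a direct computation shows that $u := w^p$ is a nonnegative subsolution of the heat equation on $(M, g(t))$:
$$u_t \;=\; p\, w^{p-1}\Delta w \;=\; \Delta_{g(t)} u \;-\; p(p-1)\,w^{p-2}|\nabla w|^2 \;\le\; \Delta_{g(t)} u.$$

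By Lemma \ref{regularity-l1} the metrics $g(t)$ are uniformly equivalent to $g_0$ with uniform geometry, so both the Sobolev inequality \eqref{Sobolev} and the volume growth \eqref{volumegrowth} pass to uniform statements for $g(t)$. Parabolic Moser iteration applied to the subsolution $u$ at scale $r = \sqrt{t/2}$ then yields a mean value inequality of the form
$$w(x,t)^p \;\le\; \frac{C}{t^{n+1}}\int_{t/2}^{t}\!\int_{B_{g(s)}(x,\sqrt{t/2})} w^p\, dV_s\, ds,$$
with $C$ uniform in $(x,t)$. To close the loop I need a uniform bound on $\int_M w^p\,dV_t$. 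Since $\partial_t g_{i\bar j} = v_{t i\bar j} = w_{i\bar j}$, the key identity $\tr_{g(t)} g_t = g^{i\bar j} w_{i\bar j} = \Delta_{g(t)} w$ holds, and combined with $w_t = \Delta w$ plus integration by parts it gives
$$\frac{d}{dt}\int_M w^p\, dV_t \;=\; \int_M\!\bigl(p\, w^{p-1}\Delta w + w^p\Delta w\bigr) dV_t \;=\; -p\int_M w^{p-2}\bigl((p-1)+w\bigr)|\nabla w|^2\, dV_t \;\le\; 0,$$
since $p$ is even (so $w^{p-2}\ge 0$) and $(p-1)+w > 0$ by the choice of $p$. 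Hence $\int_M w^p\, dV_t \le \int_M f_0^p\, dV_0 < \infty$ for all $t \ge 0$, and substituting into the mean value inequality yields $w(x,t)^p \le C t^{-n}$, that is $|w(x,t)| \le C\, t^{-n/p} \to 0$ as $t \to \infty$.

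The main technical obstacle is justifying the integration by parts on the noncompact $(M, g(t))$: $w$ is not a priori known to decay at spatial infinity, and a direct cutoff $\eta_R$ supported in a $g_0$-ball of radius $2R$ produces error terms of the form $\int \eta_R|\nabla\eta_R|(|w|^{p-1}+|w|^p)|\nabla w|\, dV_t$, which by the uniform $C^1$ bound on $w$ from Section 3 together with the volume growth \eqref{volumegrowth} are only of order $R^{-1} V_0(2R) = O(R^{2n-1})$, hence blow up. The cleanest remedy is to approximate the initial datum $f_0$ by a sequence $f_0^{(R)}$ of compactly supported functions, run the $L^p$ monotonicity argument for the solutions $w^{(R)}$ of $w_t=\Delta_{g(t)} w$ with initial data $-f_0^{(R)}$ (for which compact support of the initial data and comparison with the heat semigroup yield the spatial decay needed to legitimize the integration by parts), and pass to the limit $R\to\infty$ using the comparison principle for subsolutions together with Fatou's lemma.
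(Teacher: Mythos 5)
Your overall architecture is sound and genuinely different from the paper's in its second half. Both arguments rest on the same energy inequality (your identity $\frac{d}{dt}\int_M w^p\,dV_t=-p\int_M w^{p-2}\bigl((p-1)+w\bigr)|\nabla w|^2\,dV_t$ is exactly inequality (15) of \cite{ChauTam09}, which the paper simply cites), but where you run parabolic Moser iteration to extract a quantitative rate $|w|\le Ct^{-n/p}$, the paper argues softly: the spacetime dissipation $\int_0^\infty\int_M|\nabla w^k|^2\,dV_t\,dt$ is finite, and the uniform bounds on $u_{t\a\b}$ and $u_{tt\a}$ from Section 3 force $|\nabla w^k|$ to have a definite contribution on a time interval of fixed length near any time where it is not small, whence $|\nabla w^k|\to0$ pointwise; this is then combined with the uniform $L^p$ bound and the regularity lemma to get $w\to0$. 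Your route buys a decay rate at the cost of setting up the mean value inequality for $\partial_t-\Delta_{g(t)}$ with a time-dependent metric and evolving volume form; that step is standard given the uniform equivalence $g(t)\sim g_0$, the transferred Sobolev inequality, and the bound on $\Delta w=\tr_{g}\partial_tg$ from Lemma \ref{S-e6}, but you should say so rather than assert it.

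The genuine gap is in your remedy for the integration by parts, which you correctly identify as the delicate point. If you replace the initial datum by a compactly supported $f_0^{(R)}$ and solve the linear equation $\partial_t w^{(R)}=\Delta_{g(t)}w^{(R)}$, the volume element still evolves by $\partial_t\,dV_t=(\Delta w)\,dV_t$ with the \emph{original} $w$, so the energy computation becomes
$$\frac{d}{dt}\int_M (w^{(R)})^p\,dV_t=-p(p-1)\int_M (w^{(R)})^{p-2}|\nabla w^{(R)}|^2\,dV_t-p\int_M (w^{(R)})^{p-1}\nabla w^{(R)}\cdot\nabla w\,dV_t,$$
and the cross term no longer has a sign: the cancellation into $-p\int w^{p-2}((p-1)+w)|\nabla w|^2$ used $w^{(R)}=w$. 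Cauchy--Schwarz only yields $\frac{d}{dt}\int (w^{(R)})^p\le C\int (w^{(R)})^p$, hence an exponentially growing bound, which is useless inside your mean value inequality. The correct fix is to establish spatial decay of $w(\cdot,t)$ itself and kill the cutoff errors directly: since $|w(\cdot,0)|=|f_0|\le C(1+\rho_0)^{-(2+\e)}$ and $e^{\Lambda t}(1+\rho_0^2)^{-1-\e/2}$ is a supersolution of $\partial_t-\Delta_{g(t)}$ for suitable $\Lambda$ (using Lemma \ref{2+a-l1}(ii) and the uniform equivalence of the metrics), one gets $|w(x,t)|\le C(t)(1+\rho_0(x))^{-(2+\e)}$, and then for $p$ large the cutoff error $R^{-1}\int_{B_{2R}\setminus B_R}|w|^{p-1}|\nabla w|\,dV_t=O\bigl(R^{2n-1-(2+\e)(p-1)}\bigr)\to0$ at each fixed $t$, which legitimizes the identity; this is in effect what Lemma 3 of \cite{ChauTam09} provides and what the paper leans on.
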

 \begin{proof} We begin by showing that $|\nabla w^k|\to0$ as $t\to\infty$ for some integer $k\ge1$.
By (15) of \cite{ChauTam09}, if $p=2k+2$ with $k$ being a large integer, we have
 \begin{equation}\label{f-convergence-e1}
 \frac{d}{dt}\int_M w^pdV_t\le -C\int_M |\nabla w^k|^2dV_t
 \end{equation}

 where $C$ is a positive constant independent of $t$. Let $x_0\in M$ suppose there exist  $t_i\to\infty$ and $\e>0$ such that   $|\nabla w^k|(x_0,t_i)\ge \e$.  On the other hand, there is a neighborhood $U$ of   $x_0$, such that
 \begin{equation}
 |u_{t\alpha\beta}|+|u_{tt\alpha}|\le C
 \end{equation}
 in $U$, where $\alpha$ etc. denote  the indices for the real coordinates. Hence there is $\delta>0$ such that for all $(x,t)\in B_0(x_0)\times [t_i,t_i+\delta]$ we have $|\nabla w^k|\ge \frac\e2.$ In particular,
 $$
 \int_M|\nabla w^k|^2dV_t\ge \e'
 $$
 for all $t\in [t_i,t_i+\delta]$ for some $\e'>0$ independent of $i$. This is impossible because of \eqref{f-convergence-e1}.  Thus we have established that $|\nabla w^k|\to0$ as $t\to\infty$ for some integer $k\ge1$.  On the other hand, \eqref{f-convergence-e1} shows that the integral  $\int_M w^pdV_t$ is uniformly bounded for all $t\geq0$. Combining these last two facts together with Lemma \ref{regularity-l1}, we conclude that $w\to 0$ pointwise on $M$ as $t\to\infty$.
 \end{proof}

 We now combine the above lemma's with a Liouville
 theorem of Yau \cite{Y} for $L^p$ harmonic functions on
 complete Riemannian manifolds to give a proof of
 Theorem \ref{mainthm}.
\begin{proof}[Proof of Theorem \ref{mainthm}]
By Lemma \ref{regularity-l1}, given any sequence $ t_i\to \infty$ there is a subsequence of $u(x, t_i)$  converging smoothly and uniformly on compact subsets of $M$.  To prove Theorem \ref{mainthm} it suffices to prove that such a limit is independent of the sequence $t_i$ which we now proceed to do.

 Suppose $u(x, t_i)$ converges to $u_1(x)$ and $u(x, s_i)$ converges to $u_2(x)$ smoothly and uniformly on compact subsets of $M$, where $t_i, s_i \to \infty$ and $s_i \leq t_i$ for every $i$.  We claim  that $v=u_1 -u_2$ satisfies the Laplace equation \begin{equation}\label{laplace}\Delta_h v=0\end{equation} on $M$ where $h$ is a complete \K metric on $M$ which is equivalent to $g_0$ and has all covariant derivatives of its curvature tensor bounded.  Indeed, by \eqref{MA2} we have
\begin{equation}\label{interpolation}
\begin{split}
f(s_i)-f(t_i)&=\log \det ((g_0)_{k \bar{l}}+ u_{k
\bar{l}}(t_i))-\log \det ((g_0)_{k \bar{l}}+ u_{k
\bar{l}}(s_i))\\
&=\int_{0}^{1} \frac{d}{ds} \log \det ((g_0)_{k \bar{l}}+ (su(t_i)-(1-s)u(s_i))_{k
\bar{l}})ds\\
&=(\int_{0}^{1} G^{l\bar{k}}(s) ds) (u(t_i)-u(s_i))_{l\bar{k}}
\end{split}
\end{equation}
 for each $i$, where $G^{i\jbar}(s)$ is the inverse of the \K metric $G_{i\jbar}(s)=(g_0)_{k \bar{l}}+ (su(t_i)-(1-s)u(s_i))_{k\bar{l}})$.  It follows from Lemma \ref{regularity-l1} that some subsequence of $(\int_{0}^{1} G^{l\bar{k}}(s) ds)$ converges smoothly and uniformly on compact subsets of $M$ to a smooth limit $h^{i\jbar}$ which is the inverse of a \K metric $h$ as above.  Our claim follows by taking a limit of \eqref{interpolation} and using Lemma \ref{f-convergence-l1}.  On the other hand, by the proof of Lemma 3 in \cite{ChauTam09} we know that $\int_M |u(x, t)|^p dV_0 \leq C$ for some $p$ and some $C$ independent of $t$.  Thus we have  $\int_M |v(x)|^p dV_0 \leq C$.   It follows from this, \eqref{laplace} and the Liouville theorems in \cite{Y1} that $v=0$ and thus $u_1=u_2$.  This completes the proof of Theorem \ref{mainthm}.

\end{proof}

\section{Appendix}
We begin by explicitly defining the local \H norms used in the definitions of the elliptic and parabolic \H spaces on $M$ used in \S 2 (Also see \cite{Kr}).  Let $\Omega$ be an open set in $\R^m$. Let $k\ge0$ be an integer and $0<\alpha<1$, then the $C^{k+\alpha}$ norm of a function $u$ on $\Omega$ is defined as:
 \begin{equation*}
 \begin{split}
 ||u||_{k+\alpha;\Omega}=\sum_{|s|=0}^k\sup_{\Omega}\lf| \p^s u \ri|
+\sum_{|s|=k}\sup_{x\neq x'\in \Omega} \frac{\lf| \p^s u(x)-\p^s u(x')\ri|}{|x-x'|^\alpha}\\
 \end{split}
\end{equation*}
 where $s$ is a multi-index, and $$\p^s u=\frac{\p^{|s|}u}{\p x^s}.$$ For $T>0$, the $C^{2k+\alpha, k+\frac\alpha2}$ norm on $\Omega_T=\Omega\times[0,T]$   is defined as:
 \begin{equation*}
 \begin{split}
 ||u||_{k+\alpha,k+\frac\alpha2;\Omega_T}=&\sum_{|s|+2r=0}^{2k}\sup_{\Omega}\lf| \p^r_t\p^s u \ri|\\
 &+\sum_{|s|+2r=2k}\sup_{(x,t)\neq (x',t')\in \Omega_T} \frac{\lf| \p^r_t\p^s u(x,t)-\p^r_t\p^s u(x',t')\ri|}{|x-x'|^\alpha+|t-t'|^\frac\alpha2}
  \end{split}
\end{equation*}
 where
 $$\p^r_t\p^s u=\frac{\p^{r+|s|}u}{\p t^r\p x^s}.$$ If there is no confusion, we will simply write $||u||_{k+\alpha}$ or $||u||_{k+\alpha,k+\frac\alpha2}$.

  Now suppose $(M, g)$ has bounded geometry of order $k+\alpha$ with respect to a quasi-coordinate system $\mathcal{F}$ where $k$ is even.  Fix some $T>0$ and consider the associated  parabolic \H norm $||\cdot ||_{k, \alpha; \mathcal{F}}$ for functions on $M\times[0, T)$ as in Definition \ref{Holderspace} where the additional subscript denotes  dependence on the quasi-coordinate $\mathcal{F}$.  The following lemma basically says that the associated Banach space $C^{k+\alpha, k/2+\alpha/2}(M\times[0, T))$ from definition \ref{Holderspace} is independent of the choice of quasi-coordinate.

 \begin{lem}\label{equivalent-l1}Let
 $\mathcal{G}=\{(\theta_p, V_p)|\ p\in M\}$ be another  family of quasi-coordinate neighborhoods with data $ K_1, K_2, R$.
 Then there is a constant $C>0$ such that for all smooth function $f$ on $M\times[0,T]$
 $$
||f||_{k, \alpha; \mathcal{G}}\le C ||f||_{k, \alpha; \mathcal{F}}
$$
\end{lem}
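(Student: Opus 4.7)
The plan is to reduce the inequality to a uniform $C^{k+\alpha}$ bound on the transition maps between the two quasi-coordinate systems. Fix $q \in M$ and consider the chart $\theta_q \colon D(R) \to V_q$. For any point $z_0 \in D(R)$ set $p = \theta_q(z_0) \in M$, so that $\xi_p(0) = p = \theta_q(z_0)$, and form the local transition
\[
\Phi_{p,q} := \xi_p^{-1} \circ \theta_q,
\]
a local biholomorphism sending $z_0$ to $0 \in D(r)$. Wherever defined we simply have $f \circ \theta_q = (f \circ \xi_p) \circ \Phi_{p,q}$, and since the parabolic \H norm treats $x$-derivatives and $t$-derivatives separately while $\Phi_{p,q}$ is time-independent, a local parabolic \H estimate for $\theta_q^* f$ near $z_0$ will follow from the corresponding estimate for $\xi_p^* f$ together with a uniform $C^{k+\alpha}$ bound on $\Phi_{p,q}$ via the chain rule and Fa\`a di Bruno.

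The crux is therefore uniform control on $\Phi_{p,q}$ and its derivatives, independent of $p, q, z_0$. By bounded geometry, both $\xi_p^* g$ and $\theta_q^* g$ are uniformly equivalent to the Euclidean metric $g_e$, with constants $\kappa_1,\kappa_2$ from $\mathcal{F}$ and $K_1,K_2$ from $\mathcal{G}$. Hence $U_p = \xi_p(D(r))$ contains a $g$-ball of some definite radius $\rho > 0$ around $p$ independent of $p$, and pulling this back under $\theta_q$ produces a Euclidean ball $B(z_0,\delta) \subset D(R)$ with $\delta$ independent of $q, z_0$ and of the chosen $p = \theta_q(z_0)$. On $B(z_0,\delta)$ the map $\Phi_{p,q}$ is a well-defined biholomorphism into $D(r) \subset \mathbb{C}^n$, hence uniformly bounded in sup norm; since it is holomorphic, Cauchy's estimates give uniform bounds on all its coordinate derivatives on $B(z_0,\delta/2)$. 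Moreover the relation $\Phi_{p,q}^*(\xi_p^* g) = \theta_q^* g$ combined with the two-sided metric equivalences forces $|\det D\Phi_{p,q}|$ to be uniformly bounded above and below.

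With this in hand, composition with $\Phi_{p,q}$ is a bounded linear operator from the parabolic \H space on $\Phi_{p,q}(B(z_0,\delta/2)) \times [0,T]$ to that on $B(z_0,\delta/2) \times [0,T]$ with operator norm independent of the triple $(p,q,z_0)$; this is a routine chain-rule and product-rule computation, using that $\Phi_{p,q}$ carries no $t$-dependence so that time derivatives and the time \H seminorm pull through cleanly. To finish, for fixed $q$ we bound $\|\theta_q^* f\|_{D(R)\times[0,T);k+\alpha,k/2+\alpha/2}$ by combining these local estimates. The sup part at a point $z$ is obtained directly from the ball centered at $z$. For the spatial \H seminorm of a top-order derivative, a pair $(z,z')$ with $|z-z'| \le \delta/4$ lies in a common $B(z,\delta/2)$ and is handled by the local composition estimate, while a pair with $|z-z'| > \delta/4$ is controlled by twice the sup norm and the bound $|z-z'|^{-\alpha} \le (\delta/4)^{-\alpha}$. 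The time \H seminorm is treated identically. Taking the sup over $q$ yields $\|f\|_{k,\alpha;\mathcal{G}} \le C\|f\|_{k,\alpha;\mathcal{F}}$.

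The main obstacle is the geometric input of the second paragraph: establishing a uniform $\delta > 0$ such that $B(z_0,\delta)$ always lies in the domain of $\Phi_{p,q}$, regardless of $q$ and $z_0$. Once this uniformity holds, Cauchy's estimate for the bounded holomorphic map $\Phi_{p,q}$ automatically supplies uniform bounds on all its derivatives, and the subsequent chain-rule and case-splitting arguments are entirely standard.
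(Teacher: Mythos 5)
Your proposal is correct and follows the same overall skeleton as the paper's proof: construct local transition maps $\Phi_{p,q}$ with $\xi_p\circ\Phi_{p,q}=\theta_q$ on a Euclidean ball of uniform radius $\delta$, bound their derivatives uniformly in $(p,q,z_0)$, and then push the parabolic \H norm through the chain rule, using that $\Phi_{p,q}$ is time-independent and handling far-apart pairs in the \H seminorm by the sup norm. The one genuinely different ingredient is how you control the derivatives of $\Phi_{p,q}$. You observe that $\Phi_{p,q}$ is a holomorphic map of $B(z_0,\delta)$ into the bounded set $D(r)$ and invoke Cauchy's estimates, which yields uniform bounds on derivatives of every order in one stroke. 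The paper instead differentiates the metric compatibility relation $s_{i\bar j}\,\frac{\p z_i}{\p w_k}\,\ol{\frac{\p z_j}{\p w_l}}=h_{k\bar l}$: the zeroth-order relation together with the two-sided equivalence with $g_e$ bounds $D\Phi_{p,q}$, the once-differentiated relation is solved algebraically for $D^2\Phi_{p,q}$, and the $C^{2+\alpha}$ regularity of the metric components then supplies the \H bound on $D^3\Phi_{p,q}$. Your route is shorter and does not consume the $C^{k+\alpha}$ regularity of the metrics at this step; the paper's route is the one that survives if the charts are not holomorphic. One point you should make explicit: since the $\xi_p$ are only \emph{local} biholomorphisms onto $U_p$ (possibly non-injective), ``$\xi_p^{-1}\circ\theta_q$'' must be constructed as a lift of $\theta_q$ through $\xi_p$ by continuation along paths from $z_0$; the continuation cannot escape $D(r)$ because the lifted path has Euclidean length at most $(K_2/\kappa_1)^{1/2}$ times that of the path downstairs, and this is precisely the content of the lemma from \cite{ChauTam07} that the paper cites to produce $\phi$ on $D(w_0,\e)$ with $\xi_q\circ\phi=\theta_p$. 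With that lifting step made precise, your argument is complete.
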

\begin{proof} We just prove the case that $k=2$. Let $f$ be a smooth function on $M\times[0,T]$. Let $(\theta_p,V_p)$ be a quasi-coordinate neighborhood in $\mathcal{G}$. Let $w_0\in D(R)$ and let $\theta_p(w_0)=q$. Let $(\xi_q,U_q)$ be a quasi-coordinate in $\mathcal{F}$ such that $\xi_q(0)=q$. By \cite[Lemma 3.2]{ChauTam07} in \cite{ChauTam07}, suppose $D(w_0,\e)=\{|w-w_0|<\e\}\subset D(R)$, then there is a local biholomorphism $\phi: D(w_0,\e)\to D(r)$ such that $\phi(w_0)=0$ and $\xi_q\circ\phi=\theta_p$, provided $\e<\e_0$ which depends only on $\mathcal{F}$ and $\mathcal{G}$. Suppose $\phi(w)=z$. Then
$$
\xi_q^*(f)=\theta_p^*(f)
$$
and
$$s_{i\jbar}\frac{\p z_i}{\p w_k} \ol{\frac{\p   z_j}{\p w_l}}.=h_{k\bar{l}}$$
on $\theta(D(w_0,\e))$ where $h_{k\bar{l}}:=\theta_q^*(g)(\frac{\p}{\p w_k},\frac{\p}{\p \bar w_l})$ and $s_{i\jbar}:=\xi_p^*(g)(\frac{\p}{\p z_k},\frac{\p}{\p \bar z_l})$.

Let $k=l$ and using the fact that $\xi_q^*(g)$ and $\theta_p^*(g)$ are uniformly equivalent to the Euclidean metric, we conclude that

$$
|\frac{\p z_i}{\p w_k}|\le C
$$
on $ D(w_0,\e)$. Now
$$
\frac{\p \theta_p^*(f)}{\p w_k}=\frac{\p \xi_q^*(f)}{\p z_i}\frac{\p z_i}{\p w_k}
$$
we conclude that on $ D(w_0,\e)$
$$
|\theta_p^*(f)|+|D_w  \theta_p^*(f)|\le C||f||_{2,\alpha,\mathcal{F}}.
$$
It is also easy to see that on $ D(w_0,\e)$
$$
 |D_t  \theta_p^*(f)|=|D_t  \xi_q^*(f)|\le C||f||_{2,\alpha,\mathcal{F}}.
$$

On the other hand,
\begin{equation}\label{2nd-e1}
\begin{split}
\frac{\p }{\p w_a}h_{i\jbar}=&\frac{\p }{\p z_b}s_{k\bar{l}}\frac{\p z_b}{\p w_a}\frac{\p z_k}{\p w_i} \ol{\frac{\p z_l}{\p   w_j}}\\
&+s_{k\bar{l}}\frac{\p^2 z_k}{\p w_i\p w_a} \ol{\frac{\p z_l}{\p   w_j}}.
\end{split}
\end{equation}
Consider the vector $v=\frac{\p}{\p z_k}\frac{\p^2 z_k}{\p w_i\p w_a}$. Let $v=a_j \phi_*\frac{\p}{\p w_j}.$
$$
\xi_q^*(g)(v,\phi_*\frac{\p}{\p \bar w_c})=a_jh_{j\bar{c}}.
$$
Hence
\begin{equation}\label{2nd-e2}
\begin{split}
   v=&h^{j\bar c}\xi_q^*(g)(v,\phi_*\frac{\p}{\p \bar w_c})\phi_*\frac{\p}{\p w_j}\\
   =&h^{j\bar c}s_{k\bar{l}}\frac{\p^2 z_k}{\p w_i\p w_a}\ol{\frac{\p z_l}{\p w_c}}\frac{\p z_d}{\p w_j}\frac{\p}{\p z_d}.
   \end{split}
\end{equation}
So
\begin{equation}\label{2nd-e3}
\begin{split}
\frac{\p^2 z_d}{\p w_i\p w_a}&=h^{j\bar c}s_{k\bar{l}}\frac{\p^2 z_k}{\p w_i\p w_a}\ol{\frac{\p z_l}{\p w_c}}\frac{\p z_d}{\p w_j}\\
&=h^{j\bar c}(\frac{\p }{\p w_a}h_{i\jbar}-\frac{\p }{\p z_b}s_{k\bar{l}}\frac{\p z_b}{\p w_a}\frac{\p z_k}{\p w_i} \ol{\frac{\p z_l}{\p   w_j}})\frac{\p z_d}{\p w_j}
\end{split}
\end{equation}
Hence we have
$$
|\frac{\p^2 z_d}{\p w_i\p w_a}|\le C
$$
on $D(w_0,\e)$. In fact, from (0.3) we see that $$\|\frac{\p^3 z_d}{\p w_i\p w_a\p w_j}\|_{\alpha}\leq C$$ ion $D(w_0,\e)$ (Note that since $s_{i\jbar}(z)$ is $C^{2+\alpha}$ and $z(w)$ is $C^1$, it follows that $\frac{\p^2 }{\p z^2}s_{i\jbar}(z(w))$ is $C^{\alpha}$ with respect to $w$).  In particular, it is easy to see that
on $D(w_0,\e)$
$$
 |D_w^2 \theta_p^*(f)|\le C||f||_{2,\alpha,\mathcal{F}}.
$$
Now if $w\in D(w_0,\e)$, then

\begin{equation}\label{2nd-e3}
\begin{split}
\frac{\p \theta_p^*(f)}{\p w_k\p w_l}(w)&-\frac{\p \theta_p^*(f)}{\p w_k\p w_l}(w_0)\\
=&\frac{\p \xi_q^*(f)}{\p z_i\p z_j}\frac{\p z_i}{\p w_k}\frac{\p z_j}{\p w_l}(w)+\frac{\p \xi_q^*(f)}{\p z_i}\frac{\p^2 z_i}{\p w_k\p w_l}(w)\\
&-\frac{\p \xi_q^*(f)}{\p z_i\p z_j}\frac{\p z_i}{\p w_k}\frac{\p z_j}{\p w_l}(w_0)-\frac{\p \xi_q^*(f)}{\p z_i}\frac{\p^2 z_i}{\p w_k\p w_l}(w_0).
\end{split}
\end{equation}
Since $|z-z_0|\le C|w-w_0|$, we can conclude that
$$
\frac{|\frac{\p \theta_p^*(f)}{\p w_k\p w_l}(w) -\frac{\p \theta_p^*(f)}{\p w_k\p w_l}(w_0)|}{(|w-w_0|^2+|t-t_0|)^\frac\alpha2}\le C||f||_{2,\alpha,\mathcal{F}}
$$
for $w\in D(w_0,\e)$. One can also obtain the H\"older estimate for $\p_t\theta_p^*(f)$. This completes the proof of the Lemma.

\end{proof}

\bibliographystyle{amsplain}

 \end{document}